\newcommand{\vecsp}[2]{\mathbb{F}_{#1}^{#2}}
\newcommand{\st}{:\text{ }}
\newcommand{\bracketed}[1]{{(#1)}}
\newcommand{\vecline}[1]{\underline{#1}}
\newcommand{\rank}{\mathrm{rank}}
\newcommand{\vctwo}{\mathrm{VC}_2}
\def\VC{\mathrm{VC}}
\def\GS{\mathrm{GS}}
\def\F{\mathbb{F}}
\newcommand{\QGS}{\mathrm{QGS}}
\def\F{\mathbb{F}}
\def\fnz{\mathrm{fnz}}
\DeclarePairedDelimiter\gen{\langle}{\rangle}
\DeclarePairedDelimiter\abs{|}{|}
\DeclarePairedDelimiter\babs{\big|}{\big|}
\DeclarePairedDelimiter\set{\{}{\}}
\begin{document}

\newtheorem{theorem}{Theorem}[section]
\newtheorem{definition}[theorem]{Definition}
\newtheorem*{arl}{Arithmetic Regularity Lemma}
\newtheorem*{nntheorem}{Theorem}
\newtheorem{lemma}[theorem]{Lemma}
\newtheorem*{nclaim}{Claim}
\newtheorem{claim}[theorem]{Claim}
\newtheorem{corollary}[theorem]{Corollary}
\newtheorem{prop}[theorem]{Proposition}
\newtheorem{conj}[theorem]{Conjecture}

\theoremstyle{definition}
\newtheorem{example}[theorem]{Example}

\title{On a conjecture of Terry and Wolf}
\author{V. Gladkova}

\begin{abstract}
This paper shows that the $\VC_2$-dimension of a subset of $\F_p^n$ known as the `quadratic Green-Sanders example' is at least 3 and at most 501. Bounded $\VC_2$-dimension is a model-theoretic tameness property, whose effect on higher-order arithmetic regularity lemmas was recently studied by Terry and Wolf, as an analogue to bounded $\VC$-dimension in the linear setting. It is in that line of work that the quadratic Green-Sanders example was introduced, and the upper bound of 501 confirms a conjecture of Terry and Wolf. The techniques used in the proof are purely combinatorial and exploit a result in bipartite Ramsey theory. Additionally, the paper presents a much simplified proof of the fact that the (linear) Green-Sanders example has $\VC$-dimension at most 3.
\end{abstract}

\subjclass[2010]{Primary 11B30, 03C45;
Secondary 05D10}

\maketitle

\section{Introduction}

This paper concerns the so-called \emph{Green-Sanders examples}, a term coined by Terry and Wolf \cite{terry-wolf} to describe certain subsets of $\vecsp{p}{n}$ which are known to be counterexamples to some strengthenings of the arithmetic regularity lemma. An \emph{arithmetic regularity lemma} is a group-theoretic analogue of Szemer\'edi's regularity lemma for graphs \cite{szemeredi}: given a set $A \subseteq \F_p^n$, it provides a polynomially-structured partition of the space such that on most parts of the partition, $A$ is `uniform' in a certain sense. A `linear' version of the arithmetic regularity lemma, proved by Green \cite{tower-type}, states that for any $\epsilon > 0$ and any $A \subseteq \F_p^n$, there is a subspace $H \leqslant \vecsp{p}{n}$ such that $A$ is Fourier-uniform on all but an $\epsilon$-proportion of the cosets of $H$, with the codimension of $H$ depending only on the choice of $\epsilon$. Green \cite{tower-type} further showed that the codimension of $H$ can be as large as a tower of height $\Omega(\log_2 \epsilon^{-1})$, which was later strengthened to a tower of height $\epsilon^{-1}$ by Hosseini, Lovett, Moshkovitz and Shapira \cite{arl-lowerbound}.

This arithmetic regularity lemma cannot be improved to produce a subspace for which $A$ is Fourier-uniform on all of its cosets, as demonstrated by Green and Sanders \cite{green-sanders} via a construction which originated in the work of Bergelson, Deuber and Hindman \cite{bdh92}.

\begin{definition}[Linear Green-Sanders example]
    \label{def:linear-gs}
    Let $w_1, \ldots, w_n$ denote the standard basis of $\vecsp{p}{n}$. The \emph{linear Green-Sanders example} in $\vecsp{p}{n}$ is defined as
    $$\GS(p, n) = \set{x \in \vecsp{p}{n} \st \exists i \text{ } w_j^T x = 0 \text{ for }j < i, w_i^T x = 1}.$$
\end{definition}

However, Terry and Wolf \cite{terry-wolf-pre} (see also Conant \cite{gabespaper}) showed that such an improvement of the arithmetic regularity lemma can be obtained for a class of sets known as `stable' sets \cite[Definition 2]{terry-wolf-pre}. In fact, they proved that not only can one find a subspace $H$ such that a stable set $A$ is Fourier-uniform on all of its cosets, but in fact $A$ has near-trivial density (that is, almost 0 or almost 1) on all cosets; moreover, one can ensure that the codimension of $H$ is at most polynomial in $\epsilon^{-1}$ -- a drastic improvement on the tower-type bounds required for general sets. Alon, Fox and Zhao \cite{bounded-vc-arl} and Sisask \cite{sisask} subsequently generalised this to sets of bounded $\VC$-dimension, which is defined as follows in the context of abelian groups.

\begin{definition}[$\VC$-dimension of a subset]
\label{def:vcdim}
Given $k\geq 1$, a finite abelian group $G$ and a subset $A\subseteq G$, $A$ is said to have \emph{$\VC$-dimension at least $k$} if there exist elements $\{x_i: i\in [k]\}\cup \{y_S:S\subseteq [k]\}$ in $G$ such that $x_i+ y_S\in A$ if and only if $i\in S$.  The \emph{$\VC$-dimension of $A$} is the largest $k$ such that $A$ has $\VC$-dimension at least $k$.
\end{definition}

The arithmetic regularity lemma for sets of bounded $\VC$-dimension likewise achieves polynomial bounds on the codimension of $H$ as well as the near-trivial density of $A$ on most of the cosets, but does not guarantee it for all cosets. In this way, it can be thought of as a halfway point between the stable regularity lemma of Terry and Wolf \cite{terry-wolf-pre}, and the arithmetic regularity lemma of Green \cite{tower-type}.

\begin{theorem}[Arithmetic regularity lemma for sets of bounded $\mathrm{VC}$-dimension \cite{bounded-vc-arl, sisask}]
    \label{theorem:vc-arl}
    Fix $k \geq 1$ and $\epsilon > 0$. Suppose that $S \subseteq \vecsp{p}{n}$ has $\mathrm{VC}$-dimension of at most $k$. Then there is a subspace $H \leqslant \vecsp{p}{n}$ of index at most $\epsilon^{-k-o(1)}$ such that for all but an $\epsilon$-proportion of $c \in \vecsp{p}{n}$, either $\abs{S \cap (H+c)} \geq (1-\epsilon) \abs{H}$ or $\abs{S \cap (H+c)} \leq \epsilon \abs{H}$.
\end{theorem}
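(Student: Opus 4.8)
\emph{Overview and reformulation.} The plan is to derive the statement from the theory of $\epsilon$-approximations for set systems of bounded VC-dimension, with Haussler's packing lemma as the combinatorial engine and a Chang-type spectral-structure theorem to render the resulting subspace arithmetic. Write $\mu$ for the uniform probability measure on $\vecsp{p}{n}$, let $\omega = e^{2\pi i/p}$, write $\mathbf 1_S$ for the indicator function of $S$, and set $\rho_H(c) = \abs{S \cap (H+c)}/\abs{H}$ for the density of $S$ on the coset $H+c$. A coset on which $S$ has density $\rho$ contributes $\rho(1-\rho)$ to the within-coset variance, so by the law of total variance together with Parseval one has $\mathbb{E}_{c}\bigl[\rho_H(c)(1 - \rho_H(c))\bigr] = \sum_{\xi \notin H^{\perp}} \abs{\widehat{\mathbf 1_S}(\xi)}^2$, where $\mathbb{E}_{c}$ is the average over the cosets of $H$. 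Since any coset with $\rho_H(c) \in (\epsilon, 1-\epsilon)$ contributes at least $\epsilon(1-\epsilon)$ to the left-hand side, Markov's inequality reduces the theorem to the task: \emph{produce a subspace $H$ of index at most $\epsilon^{-k-o(1)}$ with $\sum_{\xi \notin H^{\perp}} \abs{\widehat{\mathbf 1_S}(\xi)}^2 \le \epsilon^2/2$}, i.e.\ one carrying almost all of the Fourier $L^2$-mass of $\mathbf 1_S$.

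\emph{The VC input.} By Definition~\ref{def:vcdim}, the VC-dimension of $S$ is precisely the VC-dimension of the family of translates $\mathcal{T} = \set{S + g : g \in \vecsp{p}{n}}$, since a $k$-element set is shattered in the sense of Definition~\ref{def:vcdim} exactly when it is shattered by $\mathcal{T}$; hence $\mathcal{T}$ has VC-dimension at most $k$. Applying Haussler's packing lemma to $\mathcal{T}$ under the pseudometric $(A,B) \mapsto \mu(A \triangle B)$, for every $\delta > 0$ a maximal $\delta$-separated subfamily of $\mathcal{T}$ --- which is automatically a $\delta$-net for $\mathcal{T}$ --- has size at most $\delta^{-k-o(1)}$, the $o(1)$ tending to $0$ as $\delta \to 0$ with $k$ fixed. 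Such a net covers $\vecsp{p}{n}$ by at most $\delta^{-k-o(1)}$ translates of the symmetric set $P_\delta = \set{h : \mu(S \triangle (S+h)) \le \delta}$ of $\delta$-almost-periods of $\mathbf 1_S$, and therefore $\mu(P_\delta) \ge \delta^{k+o(1)}$.

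\emph{Extracting the subspace.} Starting from $\tfrac12 \mu(S \triangle (S+h)) = \sum_{\xi} \abs{\widehat{\mathbf 1_S}(\xi)}^2 \bigl(1 - \operatorname{Re} \omega^{\xi \cdot h}\bigr)$ and averaging over $h \in P_\delta$ yields $\sum_{\xi} \abs{\widehat{\mathbf 1_S}(\xi)}^2 \bigl(1 - \operatorname{Re} \widehat{\mu_{P_\delta}}(\xi)\bigr) \le \delta/2$; as $1 - \operatorname{Re} \widehat{\mu_{P_\delta}}(\xi) \ge \tfrac12$ whenever $\abs{\widehat{\mu_{P_\delta}}(\xi)} < \tfrac12$, the Fourier $L^2$-mass of $\mathbf 1_S$ lying off the large-spectrum set $\Lambda = \set{\xi : \abs{\widehat{\mu_{P_\delta}}(\xi)} \ge \tfrac12}$ is at most $\delta$. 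By Chang's lemma, $\Lambda$ is contained in a subspace $V$ with $\dim V = O\bigl(\log(1/\mu(P_\delta))\bigr) = O(k \log(1/\delta))$. Taking $H = V^{\perp}$ and $\delta = \epsilon^2/2$ then gives $\sum_{\xi \notin H^{\perp}} \abs{\widehat{\mathbf 1_S}(\xi)}^2 \le \epsilon^2/2$ and $[\vecsp{p}{n} : H] = \abs{V} \le \epsilon^{-C_p k}$, which already establishes the theorem with some constant $C_p$ in the exponent.

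\emph{The main obstacle.} Reducing the exponent to the stated $k + o(1)$ is the delicate point, and is what I would expect to require most of the work: the square-root loss in the Markov step (which forces $\delta$ to be a power of $\epsilon$) and the constant in Chang's lemma each inflate the exponent, so the sharp bound cannot be read off the $L^2$-mass reformulation above. The remedy is to run the VC-regularity argument directly on the Cayley-type bipartite graph defined by $x \sim y \iff x - y \in S$ (which again has VC-dimension at most $k$): one partitions $\vecsp{p}{n}$ into $\epsilon^{-k-o(1)}$ parts on which this graph is \emph{homogeneous}, using Haussler packing at scale $\epsilon$ together with the fact --- due to Lov\'asz--Szegedy and to Alon--Fischer--Newman, and sharpened by Fox--Pach--Suk --- that bounded VC-dimension upgrades ``$\epsilon$-regular'' pairs to pairs of density at most $\epsilon$ or at least $1-\epsilon$, and then uses the additive structure of $\vecsp{p}{n}$ to replace this partition by the coset partition of a subspace of comparable index. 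I expect this homogeneity upgrade, and above all the final arithmetic linearisation carried out with tight control of the index, to be the technical heart of the proof.
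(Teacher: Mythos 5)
You should note first that the paper contains no proof of Theorem \ref{theorem:vc-arl} at all: it is imported as a black box from \cite{bounded-vc-arl} and \cite{sisask}, so your argument can only be measured against those sources. Measured that way, your first three steps are correct and, modulo the standard black boxes you invoke (Haussler's packing lemma, Chang's lemma), complete: the variance/Parseval identity, the Markov reduction, the lower bound $\mu(P_\delta)\ge\delta^{k+o(1)}$ on the density of almost-periods, and the passage through the large spectrum to $H=V^\perp$ are all sound. But, as you yourself say, this establishes the theorem only with index $\epsilon^{-C_p k}$: the squaring forced by the $L^2$/Markov reformulation and the constant in Chang's lemma (together with the $\log p$ factor incurred when converting dimension into index) are genuine losses, so the stated exponent $k+o(1)$ is not reached by this route.

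The genuine gap is in your final paragraph. The pivotal step there --- ``use the additive structure of $\vecsp{p}{n}$ to replace this partition by the coset partition of a subspace of comparable index'' --- is not a known reduction and is precisely where the content of the theorem lies: none of the graph-theoretic inputs you cite (Lov\'asz--Szegedy, Alon--Fischer--Newman, Fox--Pach--Suk) produces cosets, and there is no general principle converting an arbitrary partition into $m$ parts compatible with the Cayley graph of $S$ into a subspace partition of index $m^{1+o(1)}$. The cited proofs obtain the sharp exponent by staying arithmetic throughout and avoiding the large-spectrum/Chang detour that causes your loss: Alon--Fox--Zhao work directly with the dense almost-period set supplied by Haussler packing and extract the subspace from it with only a logarithmic degradation of the almost-periodicity, while Sisask proceeds via Croot--Sisask-style $L^p$ almost-periodicity. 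So what you have is a correct proof of a weaker bound (exponent $O_p(k)$ rather than $k+o(1)$), plus an honestly flagged plan whose decisive step would not go through as described.
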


Indeed, exceptional cosets in Theorem \ref{theorem:vc-arl} are unavoidable since the linear Green-Sanders example has bounded $\mathrm{VC}$-dimension: specifically, Terry and Wolf showed that it has $\VC$-dimension at most $3$ \cite[Corollary A.10]{terry-wolf-v2}. Their argument for $p > 3$ was rather technically involved, and Section \ref{sec:vclin} of this paper presents a simpler proof, as well as establishing the following stronger theorem for $p \geq 5$.

\begin{theorem}
\label{thm:vcp5}
For all primes $p \geq 5$ and all $n\in \mathbb{N}$, the $\VC$-dimension of $\GS(p,n)$ is 2.
\end{theorem}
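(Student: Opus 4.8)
Everything I would do follows from the description of $\GS(p,n)$ as the set of vectors whose first nonzero coordinate equals $1$, or equivalently from the recursion
\[
\GS(p,n)=\bigl(\{1\}\times\mathbb{F}_p^{n-1}\bigr)\ \sqcup\ \bigl(\{0\}\times\GS(p,n-1)\bigr).
\]
The lower bound $\VC(\GS(p,n))\ge 2$ is easy for $n\ge 2$: it suffices to shatter a $2$-element set, and this can be done inside $\mathbb{F}_p^2\hookrightarrow\mathbb{F}_p^n$ with, say, $x_1=(1,0)$, $x_2=(0,1)$ and $y_\emptyset=(2,2)$, $y_{\{1\}}=(0,3)$, $y_{\{2\}}=(1,0)$, $y_{\{1,2\}}=(0,0)$; one checks the eight membership conditions by hand, the hypothesis $p\ge 5$ serving only to keep $2,3,4\notin\{0,1\}$. (For $n\le 1$, $\GS(p,1)=\{1\}$ has $\VC$-dimension $1$.)

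For the upper bound, translate a hypothetical shattering of $\{x_1,x_2,x_3\}$ so that $x_1=0$; then $\VC(\GS(p,n))\ge 3$ forces the existence of $d,e\in\mathbb{F}_p^n$ with $0,d,e$ pairwise distinct for which
\[
\Psi_{d,e}\colon v\longmapsto\bigl(\id{\GS}(v),\,\id{\GS}(v+d),\,\id{\GS}(v+e)\bigr)
\]
is onto $\{0,1\}^3$. I would rule this out by proving, by induction on $n$, the stronger assertion that for $p\ge 5$ and every such $d,e$ the image of $\Psi_{d,e}$ omits at least one of the six \emph{mixed} patterns, i.e.\ those in $\{0,1\}^3\setminus\{(0,0,0),(1,1,1)\}$. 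Writing $v=(v_1,v')$, $d=(d_1,d')$, $e=(e_1,e')$ and using the recursion, each coordinate of $\Psi_{d,e}(v)$ is determined by $v_1$ (resp.\ $v_1+d_1$, $v_1+e_1$) — it is $1$ iff that quantity is $1$ — unless the quantity is $0$, in which case it passes to $\id{\GS(p,n-1)}$ of the corresponding tail. The key case is $d_1=e_1=0$: here $\Img(\Psi_{d,e})=\{(0,0,0),(1,1,1)\}\cup\Img(\Psi'_{d',e'})$ with $\Psi'$ the analogous map for $\GS(p,n-1)$ and $0,d',e'$ still distinct, so the inductive hypothesis applies verbatim — this is exactly why the invariant must be about the six mixed patterns rather than merely ``some pattern is missed''. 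When $(d_1,e_1)\ne(0,0)$ the only values of $v_1$ at which the recursion can override the first-coordinate behaviour are $0$, $-d_1$, $-e_1$ (not all distinct), so one is left with a short tabulation, organised by $d_1,e_1$ and in particular by whether they lie in $\{0,\pm1\}$, and in every subcase a mixed pattern turns out to be unattainable.

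The main obstacle is that last tabulation, and above all the configurations with $\{d_1,e_1\}=\{1,-1\}$: over $\mathbb{F}_p$ with $p\ge5$ these omit the pattern $(0,1,1)$ (or its mirror $(1,1,0)$), whereas over $\mathbb{F}_3$ the coincidence $-1=2$ lets all six mixed patterns occur — consistent with $\GS(3,n)$ possibly having $\VC$-dimension $3$ — so this is precisely where $p\ge5$ has to be invoked, and the case analysis must be arranged so that it is available there. A smaller but real point is getting the induction hypothesis right (the six mixed patterns, not merely an omitted pattern) so that the $d_1=e_1=0$ reduction closes the loop.
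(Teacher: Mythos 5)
Your proposal is correct, but it takes a genuinely different route from the paper. The paper never invokes the recursion $\GS(p,n)=(\set{1}\times\F_p^{n-1})\sqcup(\set{0}\times\GS(p,n-1))$; instead it works with the first-nonzero-coordinate function $\fnz$, reuses the case analysis from its proof that $\VC(\GS(p,n))\le 3$ to reduce to a shattered triple $\set{0,a_1,a_2}$ with $a_1\in\GS(p,n)$, $a_2\notin\GS(p,n)$, and then derives a contradiction from three specific mixed patterns, the last step (forcing a leading coordinate to be both $-1$ and $2$) being exactly where $p\ge5$ enters; its lower-bound witness is $\set{0,w_2}$ with translates $\set{0,w_1,-w_1,w_2}$. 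Your induction on $n$ with the strengthened invariant that the image of $\Psi_{d,e}$ misses one of the six \emph{mixed} patterns is sound: the strengthening is precisely what absorbs the patterns $(0,0,0)$ and $(1,1,1)$ contributed by the case $d_1=e_1=0$, and for $(d_1,e_1)\neq(0,0)$ the remaining check is indeed a finite one-dimensional tabulation over $\F_p$ which does close --- if $0,d_1,e_1$ are not pairwise distinct, the two coordinates sharing a first coordinate can differ only at one value of $v_1$, where the third coordinate is forced, so two mixed patterns die; if they are pairwise distinct, at most one coordinate ever defers to the tail, and each representative class ($\set{d_1,e_1}=\set{1,-1}$, one of them generic, both generic, difference $\pm1$) omits a mixed pattern for $p\ge5$, with $\set{d_1,e_1}=\set{1,-1}$ omitting $(0,1,1)$ as you say (your parenthetical ``mirror $(1,1,0)$'' is not what that case yields, but this is cosmetic). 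What each route buys: yours localises the problem to first coordinates and makes transparent that the only obstruction is the coincidence $-1=2$ available at $p=3$, while the paper's avoids any tabulation by piggybacking on the $\VC\le3$ argument it has already carried out. Your lower-bound witness checks out for $p\ge5$ and $n\ge2$, and your caveat that $\GS(p,1)$ has $\VC$-dimension $1$ is fair: the paper's own witness also requires $n\ge2$, so the theorem's ``for all $n$'' should be read accordingly.
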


In \cite{terry-wolf} and \cite{terry-wolf-vc2dim}, Terry and Wolf undertook the study of strengthenings of the arithmetic regularity lemma in the quadratic setting. Here the place of stable sets is taken by so-called `$\mathrm{NFOP}_2$' sets \cite[Definition 1.14]{terry-wolf} and $\VC$-dimension is replaced by $\vctwo$-dimension, a generalisation of $\VC$-dimension introduced by Shelah in \cite{shelah} and defined by Terry and Wolf \cite[Definition 1.11]{terry-wolf} in the context of finite abelian groups as follows.

\begin{definition}[$\VC_2$-dimension of a subset]
\label{def:vc2dim}
Given $k\geq 1$, a finite abelian group $G$ and a subset $A\subseteq G$, $A$ is said to have \emph{$\VC_2$-dimension at least $k$} if there exist elements
$$
\{x_{i}:i\in [k]\}\cup \{y_i: i\in [k]\}\cup\{z_S: S\subseteq [k]^2\}  \subseteq G
$$
such that $x_i+ y_j+ z_S\in A$ if and only if $(i,j)\in S$. The \emph{$\VC_2$-dimension of $A$} is the largest $k$ such that $A$ has $\VC_2$-dimension at least $k$.
\end{definition}

Terry and Wolf established arithmetic regularity lemmas for sets of bounded $\vctwo$-dimension \cite[Theorem 1.12]{terry-wolf} and for $\mathrm{NFOP}_2$ sets \cite[Theorem 1.19]{terry-wolf}. They also showed that a quadratic generalisation of $\GS(p, n)$ does not satisfy the conclusions of the latter \cite[Theorem 1.21]{terry-wolf}.

\begin{definition}[Quadratic Green-Sanders example]
    \label{def:quadratic-gs}
    Let $M_1, \ldots, M_n$ be a basis for the space of symmetric $n \times n$ matrices of rank $n$ over $\vecsp{p}{}$, and let $Q_i$ denote the quadratic polynomial $Q_i(x) = x^T M_i x$. The \emph{quadratic Green-Sanders example} in $\mathbb{F}_p^n$ is defined as
    $$\QGS(p,n)=\set{x \in \vecsp{p}{n} \st \exists i \text{ } Q_1(x) = \ldots = Q_{i-1}(x) = 0, Q_i(x) = 1}.$$
\end{definition}
As observed in \cite{terry-wolf}, a suitable basis for the space of symmetric $n \times n$ matrices of rank $n$ exists for odd $n$ by \cite[Lemma 3]{high-rank-basis}.

\begin{figure}[t]
     \centering
     \hfill
     \begin{subfigure}[t]{0.48\textwidth}
         \centering
         \includegraphics[width=0.8\textwidth, alt={A sketch of the linear Green-Sanders example: a square is subdivided into three equal rectangles and the middle one is shaded and labelled $(1, *, *, *)$; the bottom rectangle is further subdivided into 3 equal rectangles, with the middle one shaded and labelled $(0, 1, *, *)$; and so on.}]{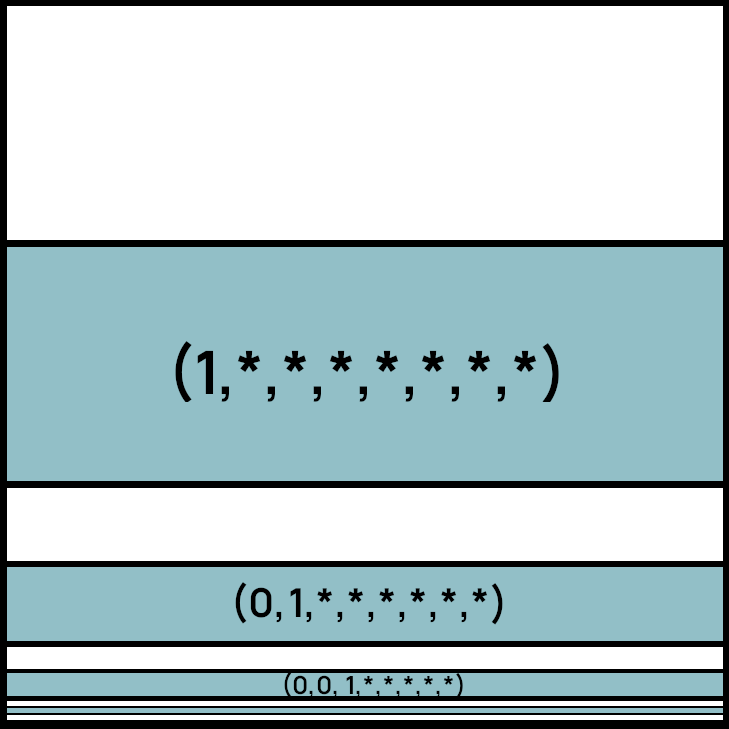}
         \caption{Linear Green-Sanders example}
     \end{subfigure}
     \hfill
     \begin{subfigure}[t]{0.48\textwidth}
         \centering
         \includegraphics[width=0.8\textwidth, alt={A sketch of the quadratic Green-Sanders example: exactly the same as the linear Green-Sanders, but rectangles are replaced with "wiggly" shapes, to convey that the level set is no longer a subspace.}]{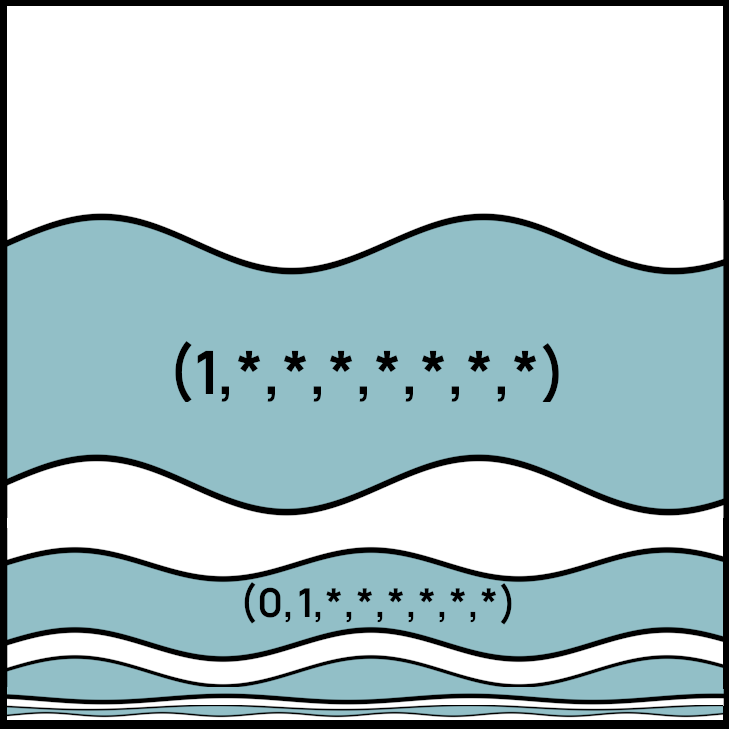}
         \caption{Quadratic Green-Sanders example}
     \end{subfigure}
     \hfill
     \caption{Green-Sanders examples in $\vecsp{3}{n}$. $\GS(3, n)$ and $\QGS(3, n)$, defined by level sets of linear and quadratic polynomials respectively.}
\end{figure}

Terry and Wolf further conjectured \cite[Conjecture 5.24]{terry-wolf-v2} that the quadratic Green-Sanders example has bounded $\VC_2$-dimension. If true, it would show that the arithmetic regularity lemma for sets of bounded $\vctwo$-dimension must be strictly weaker than that for $\mathrm{NFOP}_2$ sets. This paper confirms that this is indeed the case.

\begin{theorem}
    \label{theorem:upper}
   For all primes $p>2$ and all $n \in \mathbb{N}$, the $\VC_2$-dimension of $\QGS(p,n)$ is at most $\text{br}(5;3,3)$.
\end{theorem}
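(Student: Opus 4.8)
The plan is to argue by contradiction. Assume $\QGS(p,n)$ has $\VC_2$-dimension exceeding $\text{br}(5;3,3)$ and fix a shattering configuration $\{x_i\}_{i\in[m]}\cup\{y_j\}_{j\in[m]}\cup\{z_S\}_{S\subseteq[m]^2}$ in $\vecsp{p}{n}$ with $m>\text{br}(5;3,3)$, so that $x_i+y_j+z_S\in\QGS(p,n)$ if and only if $(i,j)\in S$. Writing $\mathbf{Q}(v)=(Q_1(v),\dots,Q_n(v))$ and $\fnz(v)$ for the first nonzero coordinate of a vector (with the convention $\fnz(0)=0$), membership reads $v\in\QGS(p,n)\iff\fnz(\mathbf{Q}(v))=1$. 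After deleting repetitions we may assume the $x_i$ are nonzero and pairwise distinct, and likewise the $y_j$, since two equal rows — or a zero row — can never be separated by any $z_S$.

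The first step is the algebraic decomposition of the grid. With $B_\ell(a,b)=a^T M_\ell b$, expanding $Q_\ell(x_i+y_j+z)$ gives, coordinate by coordinate,
\[
Q_\ell(x_i+y_j+z)=\bigl(Q_\ell(x_i)+2B_\ell(x_i,z)\bigr)+\bigl(Q_\ell(y_j)+2B_\ell(y_j,z)+Q_\ell(z)\bigr)+2B_\ell(x_i,y_j),
\]
so $\mathbf{Q}(x_i+y_j+z)=\alpha^{(z)}(i)+\beta^{(z)}(j)+R(i,j)$, where $R(i,j)\in\vecsp{p}{n}$ has $\ell$-th entry $2x_i^T M_\ell y_j$ and — crucially — does not depend on $z$. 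Two features of the high-rank hypothesis on $M_1,\dots,M_n$ drive the argument. First, for any nonzero $x$ the vectors $M_1x,\dots,M_nx$ form a basis of $\vecsp{p}{n}$ (otherwise some nonzero combination $\sum_\ell\lambda_\ell M_\ell$ would be singular); hence for fixed $i$ the map $j\mapsto R(i,j)$ is a linear bijection of $y_j$, and symmetrically for the columns, so the restriction of $R$ to any subgrid with distinct rows and columns is \emph{never} itself split. Second — the reformulation I would actually work with — since only $\fnz$ matters, the configuration shatters precisely when the rigid backbone $R\colon[m]^2\to\vecsp{p}{n}$ has the property that for every $S\subseteq[m]^2$ there is a \emph{split} function $D_S(i,j)=u_S(i)+v_S(j)+w_S$ (a row term plus a column term plus a constant, valued in $\vecsp{p}{n}$) with $\fnz\bigl(R(i,j)+D_S(i,j)\bigr)=1$ exactly on $S$.

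Next I would colour each cell $(i,j)$ by a bounded amount of local data: essentially a record of how the first-nonzero position of $R(i,j)$, and of the differences $R(i',j)-R(i,j)$ and $R(i,j')-R(i,j)$ with neighbouring cells, can be moved by the only freedom the adversary has, namely a split shift. A somewhat delicate bookkeeping shows five colour classes suffice, uniformly in $p$. By the defining property of $\text{br}(5;3,3)$, any $\text{br}(5;3,3)\times\text{br}(5;3,3)$ subgrid then contains a monochromatic $K_{3,3}$ — three rows and three columns on which the colour is constant. On this $3\times 3$ block the non-degeneracy of $R$ forces its nine values into ``general position'', so monochromaticity pins down how any single split shift can act there, and one checks that these constraints are incompatible with realising all $2^9$ patterns of the block (indeed a single bad pattern already suffices), contradicting shatterability. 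Since $m>\text{br}(5;3,3)$, the theorem follows.

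The crux — and the main obstacle — is exactly the heart of the third paragraph: isolating the right five-valued type of a cell and proving the $3\times 3$ incompatibility. This is the quadratic counterpart of the elementary computation that bounds $\VC(\GS)$ in Section~\ref{sec:vclin} and underlies Theorem~\ref{thm:vcp5}; the new phenomenon is that the cross term $R(i,j)$ cannot be removed, so instead of linearising the problem one has to exploit the genericity of $R$ — which is precisely where the high-rank property of the $M_\ell$ gets used. A secondary point needing care is keeping the number of colours, and hence the final bound, independent of $p$: the leading high-rank quadratic forms behave rather differently over small fields, so the entire argument should be run at the level of the combinatorics of $\fnz$ and split shifts, into which $p$ enters only through the bounded list of leading coefficients a single cell can be pinned to.
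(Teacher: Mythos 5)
Your skeleton coincides with the paper's: decompose $\mathbf{Q}(x_i+y_j+z)$ into a split part depending on $z$ plus the rigid cross-term vector $R(i,j)=(2x_i^TM_\ell y_j)_\ell$, colour cells with at most five colours, and invoke $br(5;3,3)$ to obtain a monochromatic $3\times 3$ subgrid. But the two steps you defer --- ``a somewhat delicate bookkeeping shows five colour classes suffice'' and ``one checks that these constraints are incompatible with realising all $2^9$ patterns of the block'' --- are the entire content of the theorem, and the shape you sketch for them does not work as stated. First, there is no one-shot bounded-colour invariant of a cell: in the paper the five values are simply $\mu^{(m)}_{i,j}=2x_i^TM_m y_j\in\{-2,-1,0,1,2\}$ (Lemma \ref{lemma:5colour}), and this bound is only available for one coordinate $m$ at a time, under the inductive hypothesis that the cross-terms at \emph{all} coordinates $t<m$ vanish on the \emph{whole} grid; even its proof already needs Proposition \ref{prop:3grid} to pin down the values $Q_t(z),Q_t(x_i+z),Q_t(y_j+z)$ for $t<m$. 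Your plan has no such level-by-level induction, and it is far from clear that any five-valued type of the full vector $R(i,j)\in\vecsp{p}{n}$, ``uniform in $p$'', exists at all.

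Second, a monochromatic $3\times 3$ block cannot by itself contradict shatterability --- indeed it must not, since Section \ref{sec:lower} exhibits $3\times 3$ configurations whose relevant cross-terms all vanish and which \emph{are} quadratically shattered. In the paper the monochromatic block yields information rather than a contradiction: using containment maps on the enlarged grid $[0,3]^2$ (the zero row and column, obtained by the normalisation $x_0=y_0=0$ which your setup discards, are essential here), Proposition \ref{prop:3grid} forces the common colour to equal $0$ and forces $Q_t(z)=Q_t(x_i+z)=Q_t(y_j+z)=0$ for $t\le m$; a separate propagation argument (the Claim inside the proof of Theorem \ref{theorem:upper}) then spreads $\mu^{(m)}_{i,j}=0$ from the $3\times3$ block to the entire $[0,k]^2$ grid, which is what makes the next level of the induction legitimate; and the contradiction appears only after the induction has exhausted all $t\in[n]$, namely that any $z$ realising a map with $\phi(0,0)=A$ would satisfy $Q_t(z)=0$ for every $t$ and hence could not lie in $A$. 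Relatedly, your appeal to ``general position'' of $R$ (via injectivity of $y\mapsto(2x^TM_\ell y)_\ell$) points in the wrong direction: the argument must \emph{force degeneracy} of the cross-terms, not exploit their genericity. As it stands the proposal is a plausible outline of the paper's first paragraph, with the actual proof missing.
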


Here, $br(r;s,t)$ is the \emph{bipartite $r$-colour Ramsey number}, which is the least integer $N$ such that for any $r$-colouring of the edges of the complete bipartite graph $K_{N,N}$, there is a monochromatic copy of $K_{s,t}$. A result of Conlon \cite{bipartite-ramsey} implies the bound $br(r;3,3) \leq 4r^3+1$ (see Appendix \ref{appendix:bipartite-ramsey}), which in particular means that the $\VC_2$-dimension of $\QGS(p, n)$ is at most $501$.

Finally, Section \ref{sec:lower} exhibits a non-trivial lower bound on the $\VC_2$-dimension of the quadratic Green-Sanders example.

\begin{theorem}
\label{theorem:lower}
For all primes $p>2$ and all $n \geq 31$, the $\VC_2$-dimension of $\QGS(p,n)$ is at least 3.
\end{theorem}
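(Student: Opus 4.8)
The plan is to produce explicit witnesses: vectors $x_1,x_2,x_3,y_1,y_2,y_3$ together with a vector $z_S$ for every $S\subseteq[3]^2$ in $\vecsp{p}{n}$ such that $x_i+y_j+z_S\in\QGS(p,n)$ precisely when $(i,j)\in S$. Since the $x_i$ and $y_j$ may be fixed once and for all, this reduces to the following: choose the $x_i,y_j$ so that the nine ``grid points'' $v_{ij}:=x_i+y_j$ are shattered by the translates of $\QGS(p,n)$; that is, for each $S\subseteq[3]^2$ there is a single $z_S$ with $v_{ij}+z_S\in\QGS(p,n)$ exactly for $(i,j)\in S$.

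First I would record the basic identities. Writing $B_k(u,w)=u^{T}M_kw$ for the polar form of $Q_k$, one has $Q_k(v+z)=Q_k(v)+2B_k(v,z)+Q_k(z)$, and $v\in\QGS(p,n)$ iff the least $k$ with $Q_k(v)\neq0$ has $Q_k(v)=1$. The feature to exploit is the high-rank hypothesis on $M_1,\dots,M_n$: every non-zero combination $\sum_kc_kM_k$ is non-degenerate, so the maps $z\mapsto(Q_k(v+z))_k$ are as non-degenerate as possible, and a generic choice of $x_i,y_j$ places the grid points in general position with respect to all the $M_k$ simultaneously (in particular the nine vectors $M_kv$ are linearly independent for a generic $v$, etc.).

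The heart of the argument is to show that, for $n\geq31$ and a suitably generic choice of $x_i,y_j$, the map $z\mapsto\big(\mathbbm{1}[\,v_{ij}+z\in\QGS(p,n)\,]\big)_{(i,j)\in[3]^2}\in\{0,1\}^{9}$ is surjective. I would try to do this by a direct construction: assign to each pair $(i,j)$ a ``threshold index'' $r(i,j)\in\{1,\dots,n\}$ and arrange, for each $S$, that $Q_\ell(v_{ij}+z_S)=0$ for all $\ell<r(i,j)$ while $Q_{r(i,j)}(v_{ij}+z_S)\in\{1,2\}$, taking the value $1$ exactly when $(i,j)\in S$; this makes $v_{ij}+z_S\in\QGS(p,n)\iff(i,j)\in S$ by construction, and reduces the problem to solving, for each of the $2^{9}$ sets $S$, a system of quadratic equations in $z_S$ whose simultaneous consistency is forced by the high-rank hypothesis once $n$ is large enough. (A parallel route is a Weil/Lang--Weil-type count showing the number of valid $z_S$ is $(2^{-9}+o(1))p^{n}$, hence positive for $n\geq31$.)

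The main obstacle — and the source of the lower bound $31$ on $n$ — is the tension between the product structure and the need for $2^{9}=512$ distinct patterns. The nine grid points $v_{ij}=x_i+y_j$ span an affine subspace of dimension at most $4$, so for every fixed form $Q_k$ the values $(Q_k(v_{ij}+z))_{(i,j)}$ obey the rigid ``parallelogram'' relations
$$Q_k(v_{i'j'}+z)=Q_k(v_{i'j}+z)+Q_k(v_{ij'}+z)-Q_k(v_{ij}+z)+2(x_i-x_{i'})^{T}M_k(y_j-y_{j'})$$
along every $2\times2$ sub-grid; consequently no single form can realize all $2^{4}$ patterns on a $2\times2$ sub-grid, and the threshold indices $r(i,j)$ must genuinely spread across many of the forms $Q_1,\dots,Q_n$, which is exactly why $n$ cannot be small. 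Pushing this through — choosing the $x_i,y_j$ so that all the parallelogram constants take the values the construction needs, verifying that the quadratic systems for the $512$ sets $S$ are simultaneously solvable with genuinely distinct solutions $z_S$, and ruling out accidental memberships coming from the high-index coordinates — is the technical core, and is where both $p$ being odd and $n\geq31$ are used.
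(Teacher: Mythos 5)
Your setup is the same as the paper's: fix $x_i,y_j$, expand $Q_t(x_i+y_j+z)$ via the polar form, and try to realise all $2^9$ membership patterns by a choice of $z$; and you correctly identify the ``parallelogram'' relations as the central obstruction. But the proposal stops exactly where the real work begins, and the two mechanisms you offer for closing the gap are unsound. High rank of the $M_t$ buys you precisely this much: for a bounded collection of the forms $Q_t$ together with the linear forms $z\mapsto 2x_i^TM_tz$, $z\mapsto 2y_j^TM_tz$ (chosen linearly independent), every atom of the resulting quadratic factor is nonempty (Lemma \ref{lemma:quadratic-atom-size}); this is also the true source of the hypothesis $n\geq 31$, since the factor the paper uses has complexity $12+3=15$ and needs rank $n>30$. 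What high rank does \emph{not} do, for any $n$, is remove the identities $Q_t(x_i+y_j+z)=Q_t(x_i+z)+Q_t(y_j+z)-Q_t(z)+2x_i^TM_ty_j$, which hold for every $z$. So the claim that ``the simultaneous consistency is forced by the high-rank hypothesis once $n$ is large enough'' is false as stated, and for the same reason a Weil/Lang--Weil count giving $(2^{-9}+o(1))p^n$ valid $z_S$ per pattern cannot be right: the nine membership events are strongly dependent through these identities, and whether a given pattern $S$ admits \emph{any} realisation is exactly the combinatorial question you have deferred.

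That question is the technical core of the paper's argument and is settled by hand. One takes $x_0=y_0=0$ and chooses $x_1,x_2,y_1,y_2$ so that the twelve vectors $M_ix_j,M_iy_j$ ($i\leq 3$, $j\leq 2$) are linearly independent and all cross-terms $x_{j_1}^TM_iy_{j_2}$ vanish; then the triples $\bigl(Q_{1,2,3}(z),Q_{1,2,3}(x_i+z),Q_{1,2,3}(y_j+z)\bigr)$ can be prescribed arbitrarily by picking $z$ in a suitable atom, the grid values are then forced by the identity above, and an explicit case analysis (Cases 1--4 with their tables in Section \ref{sec:lower}) exhibits, for each of the $2^9$ containment maps, prescribed values in $\vecsp{p}{3}$ that realise it under the ``first nonzero coordinate equals $1$'' rule, taking care that every grid value is a nonzero triple so that the uncontrolled forms $Q_t$ with $t>3$ never decide membership. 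Note that only $Q_1,Q_2,Q_3$ are used, so your heuristic that the threshold indices must spread across many forms (and that this is where $n\geq 31$ comes from) does not reflect the actual mechanism. Your outline becomes a proof only once you fix the cross-term values (zero, say) and then verify, pattern by pattern, that the constrained system is solvable --- which is precisely the part omitted.
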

\noindent It seems likely that the lower bound is closer to the truth, but the precise value of the $\vctwo$-dimension of $\QGS(p,n)$ remains to be determined.

\subsection*{Acknowledgments.} The author would like to thank Caroline Terry and Julia Wolf for introducing them to the subject and bringing this problem to their attention. Additional thanks to the anonymous reviewer for their careful reading of the initial manuscript and several helpful suggestions.

\section{The $\VC$-dimension of the linear Green-Sanders example}
\label{sec:vclin}

The concept of $\VC$-dimension may be more familiar to the reader in the following form, where $\mathcal{P}(V)$ denotes the power set of a given set $V$.

\begin{definition}[$\VC$-dimension of a family of sets]
\label{def:vcfam}
    Let $V$ be a set, and let $\mathcal{F}\subseteq \mathcal{P}(V)$. A set $S\subseteq V$ is said to be \emph{shattered by $\mathcal{F}$} if $\set{S \cap F \st F \in \mathcal{F}} = \mathcal{P}(S).$ The \emph{$\VC$-dimension of $\mathcal{F}$} is the largest integer $k$ (if it exists) such that $\mathcal{F}$ shatters some set of size $k$.
\end{definition}

It is easy to verify that this definition coincides with Definition \ref{def:vcdim} upon taking $V$ to be the finite abelian group $G$, and $\mathcal{F}=\{A-x: x\in G\}$ to be the family of translates of the set $A$. It will be convenient to bear this correspondence in mind as we proceed. In a slight abuse of language, $A$ will be said to \textit{shatter} a set $S$ when the family of its translates does.

It is not difficult to see that the set $\set{(0, 0, 0), (0, 1, -1), (0, -1, 1)}$ is shattered by translates of $\GS(3,n)$ \cite{terry-wolf}, and thus the $\VC$-dimension of $\GS(3,n)$ is at least 3. This section gives a short proof showing that the $\VC$-dimension of $\GS(p,n)$ is at most 3 for any $p \geq 3$, meaning that the $\VC$-dimension of $\GS(3,n)$ is equal to 3.
\begin{theorem}
\label{theorem:vc-gs}
For all primes $p > 2$ and all $n$, $\GS(p,n)$ has $\VC$-dimension at most 3.
\end{theorem}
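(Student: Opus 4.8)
The plan is to work with the description $\GS(p,n) = \{x \in \vecsp{p}{n} : \fnz(x) = 1\}$, where $\fnz(x)$ denotes the value of the first (least-index) nonzero coordinate of $x \neq 0$; in particular $0 \notin \GS(p,n)$. Unwinding the shattering condition (with $\mathcal{F}$ the family of translates), it suffices to show that there do not exist four vectors $x_1,\dots,x_4 \in \vecsp{p}{n}$ together with vectors $\{y_S : S \subseteq \{1,2,3,4\}\}$ such that $x_i + y_S \in \GS(p,n)$ if and only if $i \in S$. Assume for contradiction that such data exist. First I note that the $x_i$ are pairwise distinct — if $x_i = x_j$ then $S = \{i\}$ cannot be realized — so they do not all coincide, and I let $t_1$ be the least coordinate on which $x_1,\dots,x_4$ do not all agree; thus all four agree on coordinates $1,\dots,t_1-1$.

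The engine of the argument is a normalisation of the witnesses $y_S$. Call $y_S$ \emph{aligned} if $(y_S)_t = -(x_i)_t$ for every $t < t_1$ (the right-hand side not depending on $i$). I claim any $y_S$ realizing a set $S \neq \emptyset, \{1,2,3,4\}$ must be aligned: if not, let $t^*$ be the least $t < t_1$ with $(y_S)_{t^*} \neq -(x_i)_{t^*}$; then each $x_i + y_S$ vanishes in coordinates below $t^*$ and takes the same nonzero value at $t^*$, so $\fnz(x_i + y_S)$ is the same for all $i$, forcing $S$ to be constant. Hence for all fourteen non-trivial $S$ the witness is aligned, each $x_i + y_S$ vanishes below $t_1$, its value at coordinate $t_1$ equals $(x_i)_{t_1} + (y_S)_{t_1}$, and $x_i + y_S \in \GS(p,n)$ forces this value into $\{0,1\}$ (it must be $1$ if nonzero; if $0$, membership is deferred to later coordinates).

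Now I split on the number $d$ of distinct values among $(x_1)_{t_1},\dots,(x_4)_{t_1}$, which is at least $2$ since $t_1$ is a coordinate of disagreement. If $d \geq 3$, pick three indices, say $1,2,3$, with pairwise distinct values at $t_1$; realizing $S = \{1,2,3\}$ would force $(x_i)_{t_1} + (y_S)_{t_1} \in \{0,1\}$ for $i = 1,2,3$, i.e.\ three distinct elements inside the two-element set $\{-(y_S)_{t_1}, 1-(y_S)_{t_1}\}$, which is absurd. If $d = 2$, relabel so the two value classes at $t_1$ are either $\{1,2\},\{3,4\}$ or $\{1,2,3\},\{4\}$. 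In the first case I claim $S = \{1,3\}$ is unrealizable: for an aligned witness, $x_1 + y_S$ and $x_2 + y_S$ share their value $\alpha$ at $t_1$, so to have $1 \in S$ and $2 \notin S$ we need $\alpha = 0$; but then $x_3 + y_S$ and $x_4 + y_S$ share a common nonzero value $\beta$ at $t_1$, and $3 \in S$ forces $\beta = 1$, whence $x_4 + y_S \in \GS(p,n)$ as well, contradicting $4 \notin S$. In the second case, realizing any $S$ with $1,2 \in S$ and $3 \notin S$ forces $(y_S)_{t_1} = -(x_1)_{t_1}$ (since $x_1 + y_S$ and $x_3 + y_S$ agree at $t_1$), which pins $\fnz(x_4 + y_S)$ to the fixed nonzero value $(x_4)_{t_1} - (x_1)_{t_1}$; hence $x_4 + y_S \in \GS(p,n)$ either always holds or always fails for such $S$, so one of $S = \{1,2\}$ and $S = \{1,2,4\}$ is unrealizable. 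In every case some $S$ is missed, contradicting shattering.

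The step I expect to be the main obstacle is the $\{1,2,3\},\{4\}$ case: the three points that agree at $t_1$ can only be separated from one another by their behaviour on coordinates beyond $t_1$, so a priori this freedom might let every pattern be realized. The key point that saves the argument is that performing any such separation already pins down $(y_S)_{t_1}$, which then removes all freedom in whether $x_4 + y_S$ lies in $\GS(p,n)$. Everything else is a short case check once the alignment normalisation is in place.
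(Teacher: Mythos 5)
Your proof is correct. It establishes the same statement as the paper but by a genuinely different decomposition. The paper first translates so that $0$ lies in the shattered set and applies the pigeonhole principle to find two nonzero shattered points with the same membership status in $\GS(p,n)$, then runs two membership-based cases (both in $A$, both in $A^C$), repeatedly deducing that the position of the first nonzero coordinate of each witness must coincide with that of the shattered points. You instead keep the $x_i$/$y_S$ formulation, locate the first coordinate $t_1$ at which the four points $x_1,\dots,x_4$ disagree, prove an ``alignment'' lemma (any witness realising a pattern other than $\emptyset$ or $[4]$ must cancel the common prefix below $t_1$), and then split according to the multiset of values $(x_i)_{t_1}$: three or more distinct values kill the pattern $\{1,2,3\}$ outright, while the $2{+}2$ and $3{+}1$ splits each pin down $(y_S)_{t_1}$ and hence the membership of one remaining point, killing $\{1,3\}$ or one of $\{1,2\},\{1,2,4\}$. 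All the case checks are sound (in particular, the $3{+}1$ case is handled correctly: separating within the triple forces $(y_S)_{t_1}=-(x_1)_{t_1}$, which freezes whether $x_4+y_S\in A$). Your route avoids both the translation normalisation and the pigeonhole step, and in fact works verbatim for $p=2$ as well; the paper's membership-pigeonhole structure is less economical here but is chosen deliberately as a blueprint, since it is the step that gets upgraded to a bipartite Ramsey argument in the quadratic setting of Section 3, and its refinement also yields the sharper bound of $2$ for $p\geq 5$.
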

There are two key observations that make the proof possible. Firstly, if a set $S$ is shattered by translates of $A$, then so is any translate of $S$. In particular, we may assume without loss of generality that $0 \in S$. The second observation is that any set of size 4 must have at least two non-zero elements that both lie in $A$ or both lie in $A^C$, by the pigeonhole principle. In Section \ref{sec:upper}, the pigeonhole principle will be replaced by a more sophisticated version of Ramsey's Theorem.

\begin{proof}[Proof of Theorem \ref{theorem:vc-gs}]
Let $\gen{x,y}$ denote the dot product $x^T y$, and let $\fnz: \vecsp{p}{n} \rightarrow [n+1]$ be the function indicating the position of the first non-zero coordinate of a vector, if it exists, defined by
$$\fnz(x) =
        \begin{cases}
            \min\set{k \st \gen{x,w_k} \neq 0} &\text{ if } x \neq \underline{0};\\
            n+1 &\text{ otherwise.}
        \end{cases}$$
Then $A$ can be rewritten as
$A=\set{x \in \vecsp{p}{n} \st \fnz(x) < n+1 \text{ and } \gen{x, w_{\fnz(x)}}=1}.$

Suppose there is a set $S \subseteq \vecsp{p}{n}$ of size at least 4 such that $S$ is shattered by translates of $A=\GS(p,n)$. Without loss of generality, we may assume that $0 \in S$. Moreover, since $\abs{S} \geq 4$, there must be two non-zero elements $a_1, a_2 \in S$ such that either $a_1, a_2 \in A$ or $a_1, a_2 \in A^C$.

\textbf{Case 1: $a_1, a_2 \in A$.} Since $A$ shatters $S$, there exists some $y \in \vecsp{p}{n}$ such that $y = 0+y \in A$ and $a_1+y, a_2+y \notin A$.
    
    Fix $j \in \set{1,2}$ and suppose that $\fnz(a_j) < \fnz(y)$. Then for all $i \leq \fnz(a_j)$,
    $$\gen{a_j+y,w_i}=\gen{a_j,w_i}+\gen{y,w_i} = \gen{a_j,w_i} \in \set{0, 1},$$
    since $a_j \in A$. However, this implies that $a_j+y \in A$, contradicting the choice of $y$. Similarly, $\fnz(y) < \fnz(a_j)$ is impossible, for then $\gen{a_j+y,w_i}=\gen{a_j,w_i}+\gen{y,w_i} = \gen{y,w_i}$ for all $i \leq \fnz(y)$, and thus $a_j+y \in A$ as before. Therefore $\fnz(y)=\fnz(a_j)$ for any $j \in \set{1,2}$, so in particular, $\fnz(a_1)=\fnz(a_2)$.

    There must also exist a different $y' \in \vecsp{p}{n}$ such that $y', a_1+y' \in A$ and $a_2+y' \notin A$. Arguing as in the preceding paragraph, it must be the case that $\fnz(y')=\fnz(a_2)$, which implies $\fnz(a_1)=\fnz(y')$. But then
    $$\gen{a_1+y',w_{\fnz(a_1)}} = \gen{a_1,w_{\fnz(a_1)}}+\gen{y',w_{\fnz(a_1)}} = 2 \notin \set{0,1}.$$
   Since $\fnz(a_1)=\fnz(y')$ implies $\fnz(a_1) \leq \fnz(a_1+y')$, the above equation shows that $a_1+y' \notin A$, which is again a contradiction.
   
\textbf{Case 2: $a_1, a_2 \notin A$.} Since $A$ shatters $S$, there exists some $y \in \vecsp{p}{n}$ such that $y \notin A$ and $a_1+y, a_2+y \in A$.
     
     Fix $j \in \set{1,2}$. If $\fnz(a_j) > \fnz(y)$, then $\gen{a_j+y,w_{\fnz(a_j+y)}}=\gen{y,w_{\fnz(y)}} \notin \set{0,1}$, contradicting the choice of $y$. Similarly, we cannot have $\fnz(y) > \fnz(a_j)$, as that would imply $\gen{a_j+y,w_{\fnz(a_j+y)}}=\gen{a_j,w_{\fnz(a_j)}} = 1$. Therefore, $\fnz(y)=\fnz(a_j)$ for any $j \in \set{1,2}$, so in particular, $\fnz(a_1)=\fnz(a_2)$.
    
    There must further exist a $y' \in \vecsp{p}{n}$ such that $y', a_1+y' \in A$ and $a_2+y' \notin A$. Since $\fnz(a_2) > \fnz(y')$ would imply $\gen{a_2+y',w_{\fnz(a_2+y')}}= \gen{y',w_{\fnz(y')}} = 1$, it must be the case that $\fnz(y') \geq \fnz(a_2) = \fnz(a_1)$. Similarly, $\fnz(y') > \fnz(a_1)$ would mean that $\gen{a_1+y',w_{\fnz(a_1+y')}}=\gen{a_1,w_{\fnz(a_1)}} \notin \set{0, 1}$, so we must have $\fnz(y') = \fnz(a_1)$, and therefore $\fnz(a_1+y') \geq \fnz(a_1)$.
    
    However, $\fnz(a_1+y')=\fnz(a_1)$ would imply
    $1=\gen{a_1+y',w_{\fnz(a_1+y')}}=\gen{a_1,w_{\fnz(a_1)}}+1 \neq 1,$
    which forces us to conclude that $\fnz(a_1+y') > \fnz(a_1)$. As a consequence, $\gen{a_1,w_{\fnz(a_1)}} = -\gen{y',w_{\fnz(y')}} = -1.$ By a symmetric argument, choosing $y'$ to be so that $y', a_2+y' \in A$ and $a_1+y' \notin A$, we must have $\gen{a_2,w_{\fnz(a_2)}}=-1$ also.
    
    Finally, there exists $y'' \in \vecsp{p}{n}$ such that $y'', a_1+y'' \notin A$ and $a_2+y'' \in A$. It is once again the case that $\fnz(a_2) = \fnz(y'') = \fnz(a_1)$ and therefore
    $$\gen{a_1+y'',w_{\fnz(y'')}}=\gen{a_2+y'',w_{\fnz(y'')}}=-1+\gen{y'',w_{\fnz(y'')}} \neq 0.$$
    However, this is impossible, as $\fnz(a_1+y''), \fnz(a_2+y'') \geq \fnz(y'')$ and the first non-zero coordinates of $a_1+y''$ and $a_2+y''$ must be different.
\vspace{0.1cm}

As both cases lead to a contradiction, $S$ cannot have size greater than 3.
\end{proof}

When $p \geq 5$, it turns out that a contradiction can be forced even when $a_1 \in A, a_2 \notin A$ which rules out the possibility of $\abs{S} \geq 3$. This results in Theorem \ref{thm:vcp5}.

\begin{proof}[Proof of Theorem \ref{thm:vcp5}] Let $p \geq 5$, and suppose that $S$ is a set of size 3 shattered by $A$. Without loss of generality, it may be assumed that $S = \set{0, a_1, a_2}$ for some non-zero $a_1, a_2 \in \vecsp{p}{n}$. Moreover, we must have $a_1 \in A, a_2 \notin A$ as a consequence of the proof above.

Since $A$ shatters $S$, there exists some $y \in \vecsp{p}{n}$ such that $y, a_1+y \notin A$ and $a_2+y \in A$. Each of $\fnz(a_2) < \fnz(y)$ and $\fnz(y) < \fnz(a_2)$ would imply $\gen{a_2+y, w_{\fnz(a_2+y)}} \neq 1$, so we must have $\fnz(a_2)=\fnz(y)$. On the other hand, if $\fnz(a_1) < \fnz(y)$, then $$\gen{a_1+y,w_{\fnz(a_1+y)}} = \gen{a_1,w_{\fnz(a_1)}} = 1,$$ contradicting the choice of $y$, so $\fnz(a_1) \geq \fnz(y) = \fnz(a_2)$.

There also exists some $y' \in \vecsp{p}{n}$ such that $y', a_2+y' \in A$ and $a_1+y' \notin A$. Similarly to the previous paragraph, this yields the conclusion $\fnz(a_2) \geq \fnz(y') = \fnz(a_1)$. Therefore, $\fnz(a_1)=\fnz(a_2)$ and, in particular, we must have
$$\gen{a_2,w_{\fnz(a_2)}}+\gen{y',w_{\fnz(y')}} = \gen{a_2+y',w_{\fnz(y')}} \in \set{0,1},$$
which implies $\gen{a_2,w_{\fnz(a_2)}} = -1$.

Finally, there exists a $y'' \in \vecsp{p}{n}$ such that $y'' \notin A$ and $a_1+y'', a_2+y'' \in A$. As in the case of $y$, this leads to $\fnz(a_2) = \fnz(y'')$, which in turn means that $\fnz(a_1) = \fnz(y'')$. Then $\fnz(a_j) \leq \fnz(a_j+y'')$ for each $j \in \{1,2\}$, and hence
$$\gen{a_j,w_{\fnz(a_j)}}+\gen{y'',w_{\fnz(y'')}} = \gen{a_j+y'',w_{\fnz(a_j)}} \in \set{0,1}.$$
However, this only holds if $\gen{y'', w_{\fnz(y'')}} = -1$ and $\gen{a_2,w_{\fnz(a_2)}} = 2$, a contradiction.

To complete the proof, observe that $A$ does indeed shatter a set of size 2: for example, $S = \set{0, w_2}$ can be shattered by taking $y$ from the set $\set{0, w_1, -w_1, w_2}$.
\end{proof}

\section{The quadratic Green-Sanders example has bounded $\VC_2$-dimension}
\label{sec:upper}

For convenience, the definition of $\vctwo$-dimension is restated below in terms of a function that assigns to each $(i,j) \in [0,k]^2$ a value from $\set{A, A^C}$. Here $[0,k]$ denotes $\{0,1,2,\dots,k\}$, as opposed to the standard $[k]=\{1,2,\dots,k\}$.

\begin{definition}[Containment map]\label{def:contmap}
    For a set $A$ and an integer $k \geq 1$, a function $\phi:[0, k]^2 \rightarrow \set{A, A^C}$ is called a \emph{containment map on $[0,k]^2$}. If $\phi$ is defined only on a subset of $[0, k]^2$, then $\phi$ is a \emph{partial containment map on $[0,k]^2$}.
\end{definition}

\begin{definition}[$\VC_2$-dimension, containment map form]
Given an integer $k\geq 1$, a finite abelian group $G$ and a subset $A\subseteq G$, $A$ is said to have \emph{$\VC_2$-dimension at least $k$} if there exist sets $X = \set{x_0, \ldots, x_{k-1}}$ and $Y = \set{y_0, \ldots, y_{k-1}}$ in $G$ such that the following holds.
For any containment map $\phi$ on $[0, k-1]^2$, there exists $z \in G$ such that $x_i+y_j+z \in \phi(i, j)$. If this is the case, we say that \emph{$z$ realises $\phi$}.
  
The tuple $(X,Y)$ is said to be \emph{quadratically shattered by $A$} if every containment map on $[0, k-1]^2$ can be realised.
\end{definition}

As in the linear case, it may be assumed without loss of generality that $x_0 = y_0 = 0$: if $(X, Y)$ is quadratically shattered by $A$, then so is $(X+t_X, Y+t_Y)$ for any $t_X, t_Y \in \vecsp{p}{n}$, as one need only replace $z$ by $z - t_X - t_Y$ to satisfy any containment map.

Throughout this section and the rest of the paper, let $A=\QGS(p,n)$, let $M_1,\dots, M_n$ be the matrices defining $A$ as in Definition \ref{def:quadratic-gs}, and let $Q_1, \ldots, Q_n: \vecsp{p}{n} \rightarrow \vecsp{p}{}$ be quadratic polynomials defined by $Q_t(x) = x^T M_t x$. The following (easily verified) identity will be used repeatedly. For any $x, y, z \in \vecsp{p}{n}$, and any $t\in[n]$,
    \begin{equation}
        \label{eq:expand}
        Q_t(x+y+z) = Q_t(x+z) + Q_t(y+z) - Q_t(z) + 2 x^T M_t y.
    \end{equation}

In the previous section, the proof of Theorem \ref{theorem:vc-gs} proceeded by finding an unavoidable `monochromatic' structure (two elements that are both in $\GS(p,n)$ or both in $\GS(p,n)^C$), then varying the (linear) containment map to deduce information about the elements of a set $S$, which was assumed to be shattered by $\GS(p,n)$. The proof of Theorem \ref{theorem:upper} follows the same general strategy but utilises a larger `monochromatic' structure at the start. To deduce information about a tuple of sets $(X,Y)$ that is assumed to be shattered by $A$, we will use equations of the form \eqref{eq:expand}. Such deductions are possible since, while $z$ will vary depending on the choice of the containment map, the values of $x$, $y$, and the corresponding cross-term $x^T M_t y$ will remain constant.

\begin{example}
    Let $A' = \set{x \in \vecsp{p}{n} \st Q_1(x) = 1}$, which is a subset of $A = \QGS(p, n)$. Suppose that $\vctwo(A') \geq 2$. Then there exist sets $X = \set{0, x_1}$ and $Y = \set{0, y_1}$ such that $(X, Y)$ is quadratically shattered by $A'$. Let $\phi_1$ and $\phi_2$ be the two containment maps on $[0,1]^2$ satisfying $\phi_i(0,0) = \phi_i(0,1) = \phi_i(1,0) = A'$. For any $z_i$ realising $\phi_i$, we have $Q_1(x_1+y_1+z_i) = 1 + 2x_1^T M_1 y_1$ by \eqref{eq:expand}. In particular, the value of $Q_1(x_1+y_1+z_i)$ is constant, regardless of $z_i$. However, $\phi_1(1,1) \neq \phi_2(1,1)$, so one of $\phi_1$ or $\phi_2$ cannot be realised. Therefore, the initial assumption was false and $\vctwo(A') = 1$.
\end{example}

The particular `monochromatic' structure the proof requires is a $3 \times 3$ subgrid in $[k]^2$ such that all cross-terms $x_i^T M_t y_j$ (corresponding to $(i,j)$ in the subgrid) have the same value for some $t \in [n]$. It turns out that if $(X, Y)$ is quadratically shattered by $A$, then under certain conditions, this value \textit{must} be 0, as the following proposition demonstrates.
\begin{prop}
\label{prop:3grid}
   Fix integers $k \geq 3$ and $m \in [n]$, and let $X = \set{0, x_1, \ldots, x_{k}}$, ${\color{black}Y = \set{0, y_1, \ldots, y_{k}}}$ be subsets of $\vecsp{p}{n}$. Suppose that for all $t < m$ and $(i,j) \in [0, k]^2$, the cross-term $\mu_{i, j}^\bracketed{t} = 2 x_i^T M_t y_j$ is equal to zero. Then there exists a containment map $\phi$ on $[0, 3]^2$ such that for any $z$ realising $\phi$ the following holds.
    \begin{enumerate}[label =\normalfont (\roman*)]
        \item \label{prop:gridlemma1} For any $t < m$ and $(i,j) \in [3]^2$,
        $Q_t(z) = Q_t(x_i+z) = Q_t(y_j+z) = 0.$

        \item \label{prop:gridlemma2} If $\mu_{i, j}^\bracketed{m}$ is constant for all $(i,j) \in [3]^2$, then \ref{prop:gridlemma1} holds for $t=m$, and
        $\mu_{i, j}^\bracketed{m} = 0.$
    \end{enumerate}
    Moreover, the value of $\phi(0,0)$ may be assigned arbitrarily.
\end{prop}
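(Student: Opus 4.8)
The plan is to reduce the proposition to a finite combinatorial fact: the existence of a single containment map $\phi$ on $[0,3]^2$ whose pattern of $A$'s and $A^C$'s is rigid enough to force the quadratic values $Q_t(x_i+y_j+z)$ of any realiser $z$ to vanish. First I would record the relevant array structure. Applying \eqref{eq:expand} with $x=x_i$ and $y=y_j$ gives, for all $z$ and all $t\in[n]$,
\[
Q_t(x_i+y_j+z)=Q_t(x_i+z)+Q_t(y_j+z)-Q_t(z)+\mu_{i,j}^{(t)}.
\]
For $t<m$ the hypothesis makes $\mu_{i,j}^{(t)}=0$, so, writing $a_i=Q_t(x_i+z)$, $b_j=Q_t(y_j+z)$ and $c=Q_t(z)$ (with $a_0=b_0=c$), the array $\bigl(Q_t(x_i+y_j+z)\bigr)_{0\le i,j\le 3}$ has the rank-one form $a_i+b_j-c$; it is the zero array precisely when all of $Q_t(z)$, $Q_t(x_i+z)$, $Q_t(y_j+z)$ vanish, which is exactly assertion (i) at level $t$. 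When $t=m$ and $\mu_{i,j}^{(m)}$ equals a constant $\lambda$ on $[3]^2$ — it vanishes automatically whenever $i=0$ or $j=0$ — the array is a rank-one array plus $\lambda$ on the $3\times 3$ interior, and is zero precisely when $Q_m$ vanishes on the boundary elements and $\lambda=0$.

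The argument is then an induction on $t$. Fix a realiser $z$ of the (still to be chosen) $\phi$, and suppose inductively, for some $t\le m$, that $Q_s$ vanishes on all sixteen elements $x_i+y_j+z$ for every $s<t$ (vacuous for $t=1$). Then each such element has first nonzero $Q$-index at least $t$, and so membership yields: if $\phi(i,j)=A$ then $Q_t(x_i+y_j+z)\in\{0,1\}$ (it equals $1$ exactly when that index is $t$), and if $\phi(i,j)=A^C$ then $Q_t(x_i+y_j+z)\ne 1$ (otherwise $x_i+y_j+z$ would lie in $A$). Thus the level-$t$ array is \emph{compatible with $\phi$}, meaning its entry lies in $\{0,1\}$ at every $A$-cell and avoids $1$ at every $A^C$-cell, with the cell $(0,0)$ placing no constraint at all. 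Consequently it suffices to produce $\phi$ — with $\phi(0,0)$ equal to any prescribed value — such that the only array compatible with $\phi$ that is either rank-one, or rank-one plus a constant on the interior, is the zero array (in which case that constant is $0$). Given such a $\phi$: for each $t<m$ the level-$t$ array vanishes, which extends the induction and proves (i); and under the extra hypothesis the same holds at level $m$, so the level-$m$ array vanishes, and evaluating it at the interior cell $(1,1)$ gives $0=Q_m(x_1+z)+Q_m(y_1+z)-Q_m(z)+\mu_{1,1}^{(m)}=\mu_{1,1}^{(m)}$, hence $\mu_{i,j}^{(m)}=0$ on $[3]^2$. Since nothing above ever used the membership of $x_0+y_0+z=z$, the value $\phi(0,0)$ is genuinely free, as claimed.

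The main obstacle is the construction of $\phi$. A few necessary conditions are immediate: compatible constant arrays force $\phi$ to use both labels, and compatible arrays supported on a single line — which are constant along that line — force each ``row'' $\{(i,0),(i,1),(i,2),(i,3)\}$ and each ``column'' $\{(0,j),(1,j),(2,j),(3,j)\}$ with $i,j\in\{1,2,3\}$, as well as $\{(0,1),(0,2),(0,3)\}$ and $\{(1,0),(2,0),(3,0)\}$, to contain both an $A$-cell and an $A^C$-cell. But these are not sufficient: naive patterns such as an all-$A$ interior with an all-$A^C$ boundary, or an $A$-diagonal with $A^C$ elsewhere, still admit nonzero compatible rank-one arrays (one finds these by solving a small linear system over $\F_p$). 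So the correct $\phi$ must be pinned down and verified by a careful finite case analysis over the seven-parameter family of rank-one arrays together with their interior-shifted analogues, and this case analysis is where essentially all of the work lies.
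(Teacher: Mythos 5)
Your setup is a faithful reformulation of the problem (the rank-one structure of the array $\bigl(Q_t(x_i+y_j+z)\bigr)$, the induction on $t$, and the compatibility constraints $\{0,1\}$ at $A$-cells and $\neq 1$ at $A^C$-cells are exactly the ingredients the paper uses), but the proposal stops precisely where the proposition's content begins. The existence of a containment map $\phi$ with the required rigidity is the whole theorem: the paper exhibits a concrete $\phi$ (equal to $A^C$ at $(0,1),(0,2),(1,0),(2,0),(2,3),(3,2)$ and $A$ elsewhere, with $\phi(0,0)$ free) and then carries out the case analysis over the constrained cells to force $Q_t(x_i+z)=Q_t(y_j+z)=0$ and $Q_t(z)-\mu^{(t)}=0$. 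You acknowledge that ``this case analysis is where essentially all of the work lies'' and do not perform it, nor do you propose a candidate $\phi$; as it stands this is a reduction plus a to-do list, not a proof.

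More seriously, the reduction target you set for part (ii) is unattainable, so the deferred case analysis could never succeed in the form you describe. You ask for a $\phi$ such that the only array compatible at the fifteen cells other than $(0,0)$ and of the form ``rank-one plus a constant $\lambda$ on the interior'' is the zero array. But for \emph{every} $\phi$ the assignment $Q_m(x_i+z)=Q_m(y_j+z)=0$ for $i,j\in[3]$, $Q_m(z)=c\neq 0$ and $\lambda=c$ produces an array that vanishes at all fifteen constrained cells (each boundary entry is $a_i$ or $b_j$, each interior entry is $-c+\lambda=0$) yet is nonzero at $(0,0)$ and has $\lambda\neq 0$; since $0$ is admissible at both $A$-cells and $A^C$-cells, no choice of $\phi$ rules it out. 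Hence one cannot conclude $\mu^{(m)}_{i,j}=0$ while, as you explicitly do, treating the cell $(0,0)$ as placing no constraint at all. The paper's proof is careful on exactly this point: the induction only yields $Q_m(z)=\mu^{(m)}_{i,j}$ (together with the vanishing of the $a_i,b_j$), and the final step $\mu^{(m)}_{i,j}=0$ is extracted from the membership of $z=x_0+y_0+z$ itself, by exploiting the freedom to assign $\phi(0,0)$ adversarially ($A$ if $\mu\neq 1$, $A^C$ if $\mu=1$). So besides supplying the missing construction and case check for part (i), you would need to repair part (ii) by reinstating the $(0,0)$ constraint in this way rather than discarding it.
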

    \begin{proof}
    Define $\phi$ to be equal to $A^C$ at $(0,1), (0,2), (1,0), (2,0), (2,3),$ and $(3,2)$, and $A$ everywhere else, with $\phi(0, 0)$ set arbitrarily and $x_0 = y_0 = 0$. Then by \eqref{eq:expand},
    \begin{equation}
        \label{eq:expand1}
        Q_t(x_i+y_j+z) = Q_t(x_i+y_0+z) + Q_t(x_0+y_j+z) - Q_t(z) + \mu_{i, j}^\bracketed{t}.
    \end{equation}
    Fix $z \in \vecsp{p}{n}$ such that $z$ realises $\phi$ (if it exists), and write $\Lambda_t(z) = Q_t(z) - \mu_{i, j}^\bracketed{t}$. We will first prove, by induction on $t$, that for all $t \leq m$ and all $(i, j) \in [3]^2$,
        \begin{equation}
            \label{eq:lemma-goal}
            \Lambda_t(z) = Q_{t}(x_i+z) = Q_{t}(y_j+z) = 0.
        \end{equation}
    This, in particular, will imply that when $t < m$, $Q_t(z) = \Lambda_t(z)+\mu_{i,j}^\bracketed{t} = \Lambda_t(z) = 0$, which proves part \ref{prop:gridlemma1}.

    Suppose, as the inductive hypothesis, that \eqref{eq:lemma-goal} holds for all $t' < t$ (the base case at $t=1$ is trivial). Then for all such $t'$ and all $(i,j) \in [0, 3]^2$, $Q_{t'}(x_i+y_j+z) = 0$ by \eqref{eq:expand1}. As a result, the value of $Q_{t'}$ cannot determine whether or not $x_i+y_j+z$ is contained in $A$, so that task is passed on to $Q_t$. In particular, this means that the possible values of $Q_t$ at $x_i+y_j+z$ are restricted by $\phi(i,j)$: for instance, if $\phi(i,j)=A$, then $Q_t(x_i+y_j+z)$ must be either $0$ or $1$.
    
    Using a superscript to note such restrictions on the value of $Q_{t}(x_i+y_j+z)$, we have the following as a consequence of equation \eqref{eq:expand1} and the particular choice of $\phi$:
    \begin{align}
        \label{eq:11} (1,1):\hspace{0.5cm} &Q_{t}^{{\color{black} \neq 1}}(x_1+y_0+z) + Q_{t}^{{\color{black} \neq 1}}(x_0+y_1+z) - \Lambda_t(z) \in \set{0,1}\\
        \label{eq:12} (1,2):\hspace{0.5cm} &Q_{t}^{{\color{black} \neq 1}}(x_1+y_0+z) + Q_{t}^{{\color{black} \neq 1}}(x_0+y_2+z) - \Lambda_t(z) \in \set{0,1}\\
        \label{eq:13} (1,3):\hspace{0.5cm} &Q_{t}^{{\color{black} \neq 1}}(x_1+y_0+z) + Q_{t}^{{\color{black} = 0, 1}}(x_0+y_3+z) - \Lambda_t(z) \in \set{0,1}\\
        (\label{eq:21} 2,1):\hspace{0.5cm} &Q_{t}^{{\color{black} \neq 1}}(x_2+y_0+z) + Q_{t}^{{\color{black} \neq 1}}(x_0+y_1+z) - \Lambda_t(z) \in \set{0,1}\\
        \label{eq:22} (2,2):\hspace{0.5cm} &Q_{t}^{{\color{black} \neq 1}}(x_2+y_0+z) + Q_{t}^{{\color{black} \neq 1}}(x_0+y_2+z) - \Lambda_t(z) \in \set{0,1}\\
        \label{eq:23} (2,3):\hspace{0.5cm} &Q_{t}^{{\color{black} \neq 1}}(x_2+y_0+z) + Q_{t}^{{\color{black} = 0, 1}}(x_0+y_3+z) - \Lambda_t(z) \neq 1\\
        \label{eq:31} (3,1):\hspace{0.5cm} &Q_{t}^{{\color{black} = 0, 1}}(x_3+y_0+z) + Q_{t}^{{\color{black} \neq 1}}(x_0+y_1+z) - \Lambda_t(z) \in \set{0,1}\\
        \label{eq:32} (3,2):\hspace{0.5cm} &Q_{t}^{{\color{black} = 0, 1}}(x_3+y_0+z) + Q_{t}^{{\color{black} \neq 1}}(x_0+y_2+z) - \Lambda_t(z) \neq 1\\
        \label{eq:33} (3,3):\hspace{0.5cm} &Q_{t}^{{\color{black} = 0, 1}}(x_3+y_0+z) + Q_{t}^{{\color{black} = 0, 1}}(x_0+y_3+z) - \Lambda_t(z) \in \set{0,1}
    \end{align}

    \noindent  Firstly, note that if $Q_{t}(x_1+y_0+z) - \Lambda_t(z) \neq 0$, then it must be equal to $\pm 1$ by \eqref{eq:13}.
    
    \textbf{Case 1. $Q_{t}(x_1+y_0+z) - \Lambda_t(z) = -1$.} Then $Q_{t}(x_0+y_3+z) = 1$ and, taking equations \eqref{eq:11} and \eqref{eq:12} into account, $Q_{t}(x_0+y_1+z) = Q_{t}(x_0+y_2+z) = 2.$
    In turn, equations \eqref{eq:31} and \eqref{eq:32} imply that $Q_{t}(x_3+y_0+z) - \Lambda_t(z) = -2,$
    which contradicts \eqref{eq:33}.
    
    \textbf{Case 2. $Q_{t}(x_1+y_0+z) - \Lambda_t(z) = 1$.} Then $Q_{t}(x_0+y_3+z) = 0$ and, due to \eqref{eq:11} and \eqref{eq:12},
    $Q_{t}(x_0+y_1+z)$ and $Q_{t}(x_0+y_2+z)$ can take values of either $0$ or $-1$. This results in three subcases.

    \textbf{Subcase 2.1.} $Q_{t}(x_0+y_1+z) = -1.$
        Equation \eqref{eq:23} implies that $Q_{t}(x_2+y_0+z) - \Lambda_t(z) \neq 1$, which means that in \eqref{eq:21}, we must have $Q_{t}(x_2+y_0+z) - \Lambda_t(z) = 2$. As a result, \eqref{eq:22} gives $Q_t(x_0+y_2+z) = -1 = Q_{t}(x_0+y_1+z)$.

        Now consider \eqref{eq:31} and \eqref{eq:32}. It must be the case that $Q_{t}(x_3+y_0+z) - \Lambda_t(z) = 1$, so
        \begin{equation}
            \label{eq:subcase(a)}
            Q_{t}^{{\color{black} = 0,1}}(x_3+y_0+z) - \Lambda_t(z) = 1 = Q_t^{{\color{black} \neq 1}}(x_1+y_0+z) - \Lambda_t(z).
        \end{equation} But the only way we can have $Q_{t}^{{\color{black} = 0,1}}(x_3+y_0+z) = Q_t^{{\color{black} \neq 1}}(x_1+y_0+z)$ is if they are both zero, meaning that $\Lambda_t(z)$ must be equal to $-1$. Substituting this into $Q_{t}(x_2+y_0+z) - \Lambda_t(z) = 2$ gives $Q_{t}^{{\color{black} \neq 1}}(x_2+y_0+z) = 1$, a contradiction.

        \textbf{Subcase 2.2.} $Q_{t}(x_0+y_2+z) = -1.$ This can be handled in exactly the same way as Subcase 2.1, by noting that equations \eqref{eq:11}-\eqref{eq:23} are symmetric in $y_1$ and $y_2$, resulting in $Q_{t}(x_2+y_0+z) - \Lambda_t(z) = 2$ and $Q_t(x_0+y_1+z) = -1$, and \eqref{eq:subcase(a)} follows similarly.

        \textbf{Subcase 2.3.} $Q_{t}(x_0+y_1+z) = Q_{t}(x_0+y_2+z) = 0.$ Since $Q_{t}(x_2+y_0+z) - \Lambda_t(z) \neq 1$ by equation \eqref{eq:23}, equation \eqref{eq:21} implies that $Q_{t}(x_2+y_0+z) - \Lambda_t(z) = 0$. At the same time, equations \eqref{eq:31} and \eqref{eq:32} give $Q_{t}(x_3+y_0+z) - \Lambda_t(z) = 0$, so
        \begin{equation*}
            Q_{t}^{{\color{black} \neq 1}}(x_2+y_0+z) = \Lambda_t(z) = Q_{t}^{{\color{black} = 0,1}}(x_3+y_0+z).
        \end{equation*}
        This can only hold if $\Lambda_t(z) = 0$ which means that $Q_{t}^{{\color{black} \neq 1}}(x_1+y_0+z) = 1$, a contradiction.
        \vspace{0.2cm}

        Hence, neither of the two cases are possible, so we must have $Q_{t}(x_1+y_0+z) - \Lambda_t(z) = 0$. Substituting this into equations \eqref{eq:11} and \eqref{eq:12} gives  $Q_{t}(x_0+y_1+z) = Q_{t}(x_0+y_2+z) = 0$. Then \eqref{eq:31} and \eqref{eq:32} imply $Q_{t}(x_3+y_0+z) - \Lambda_t(z) = 0$, and therefore
        \begin{equation*}
            Q_{t}^{{\color{black} \neq 1}}(x_1+y_0+z) = \Lambda_t(z) = Q_{t}^{{\color{black} = 0,1}}(x_3+y_0+z),
        \end{equation*}
        which is only possible if $\Lambda_t(z) = 0$. With the deductions already made, equation \eqref{eq:21} now says that $Q_{t}^{{\color{black} \neq 1}}(x_2+y_0+z) \in \set{0,1}$, meaning that $Q_{t}(x_2+y_0+z) = 0$. Finally, \eqref{eq:23} results in $Q_{t}(x_0+y_3+z) = 0$. In other words, \eqref{eq:lemma-goal} holds for $t' = t$, which completes the induction and thus the proof of part \ref{prop:gridlemma1}.

        To prove \ref{prop:gridlemma2}, suppose that $\mu_{i,j}^\bracketed{m} \neq 0$. As $\phi(0,0)$ was set arbitrarily, \eqref{eq:lemma-goal} must hold at $t=m$ regardless of the choice of $\phi(0,0)$. However, setting $\phi(0,0) = A$ if $\mu_{i,j}^\bracketed{m} \neq 1$, and $\phi(0,0) = A^C$ otherwise results in $\Lambda_m(z) = Q_{m}(z) - \mu_{i,j}^\bracketed{m} \neq 0$, which contradicts \eqref{eq:lemma-goal}. Therefore $\mu_{i,j}^\bracketed{m} = Q_m(z) = 0$ for any choice of $\phi(0,0)$.
    \end{proof}

Note that when $m=1$ in Proposition \ref{prop:3grid}, there are no starting conditions on $\mu_{i,j}^\bracketed{t}$ since there is no $t < m$. The proposition then says that if there is a $3 \times 3$ subgrid of $[k]^2$ on which all $\mu_{i,j}^\bracketed{1}$ have the same value, then this value must be 0. It turns out that once we know this, we can deduce that $\mu_{i,j}^\bracketed{1}$ must be 0 for all $(i,j) \in [0, k]^2$. Of course, Proposition \ref{prop:3grid} may then be applied with $m=2$ and so on, until we deduce that $\mu_{i,j}^\bracketed{t} = 0$ for all $t$ and $(i,j)$.

A simple but useful observation can reduce the final bound on the $\vctwo$-dimension of $A$ substantially. In order to ensure the existence of a suitable $3 \times 3$ subgrid at level $m$, we will apply a bipartite Ramsey theorem (see Proposition \ref{prop:Ramseybound} in Appendix \ref{appendix:bipartite-ramsey}) to the colouring given by the values of $\mu_{i,j}^\bracketed{m}$ on $(i,j)$. While ordinarily this colouring may consist of as many as $p$ colours, it turns out that in the particular setting we are interested in, five colours are always sufficient.

\begin{lemma}
    \label{lemma:5colour}
    Let the assumptions be as in Proposition \ref{prop:3grid}. If $(X,Y)$ is quadratically shattered by $A$, then for any pair $(i, j) \in [k]^2$, $\mu_{i,j}^\bracketed{m} \in \set{-2, -1, 0, 1, 2}$.
\end{lemma}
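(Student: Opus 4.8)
The plan is to deduce the lemma from Proposition~\ref{prop:3grid} after a harmless relabelling. Fix a pair $(i,j) \in [k]^2$. Both the hypotheses of Proposition~\ref{prop:3grid} (that $\mu_{i',j'}^\bracketed{t} = 0$ for all $(i',j') \in [0,k]^2$ and $t<m$) and the property of being quadratically shattered by $A$ are invariant under independently permuting the index sets of $X$ and of $Y$, since such permutations merely relabel the elements of the same two sets; so I would assume without loss of generality that $i = j = 3$. Proposition~\ref{prop:3grid} then supplies a containment map $\phi$ on $[0,3]^2$ with the stated properties and with $\phi(0,0)$ at my disposal, and I set $\phi(0,0) := A$. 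Since $(X,Y)$ is quadratically shattered by $A$, extending $\phi$ arbitrarily to a containment map on $[0,k]^2$ and realising that extension produces some $z \in \vecsp{p}{n}$ realising $\phi$.

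The first step of the argument would be to record that all sixteen points $x_{i'}+y_{j'}+z$ with $i',j' \in [0,3]$ vanish on $Q_1,\dots,Q_{m-1}$. Indeed, part~\ref{prop:gridlemma1} of Proposition~\ref{prop:3grid} gives $Q_t(z) = Q_t(x_{i'}+z) = Q_t(y_{j'}+z) = 0$ for all $t<m$ and $i',j'\in[3]$, and then identity~\eqref{eq:expand}, together with the hypothesis $\mu_{i',j'}^\bracketed{t}=0$ for $t<m$, propagates this to $Q_t(x_{i'}+y_{j'}+z)=0$ for every $i',j'\in[0,3]$ and $t<m$ (the cases with $i'$ or $j'$ equal to $0$ being immediate from $x_0=y_0=0$).

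The second step focuses on the four cells $(0,0),(3,0),(0,3),(3,3)$. None of $(3,0),(0,3),(3,3)$ lies among the cells $(0,1),(0,2),(1,0),(2,0),(2,3),(3,2)$ on which $\phi$ takes the value $A^C$, and $\phi(0,0)=A$ by choice, so $\phi$ equals $A$ at all four of these cells; hence $z$, $x_3+z$, $y_3+z$ and $x_3+y_3+z$ all lie in $A$. Each of these four points is clean below level $m$ by the first step, so its first nonzero value among $Q_1,Q_2,\dots$ occurs at some level $\geq m$ and equals $1$ only if it occurs exactly at level $m$; consequently $Q_m$ takes a value in $\set{0,1}$ at each of the four points. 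Applying \eqref{eq:expand} at level $m$ with $x=x_3$, $y=y_3$ then gives
$$\mu_{3,3}^\bracketed{m} = Q_m(x_3+y_3+z) - Q_m(x_3+z) - Q_m(y_3+z) + Q_m(z),$$
a sum of two $\set{0,1}$-valued quantities minus two $\set{0,1}$-valued quantities, which lies in $\set{-2,-1,0,1,2}$. Since $\mu_{3,3}^\bracketed{m}$ in the relabelled configuration is precisely $\mu_{i,j}^\bracketed{m}$ in the original one, this is the claimed bound.

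I do not expect a genuine obstacle: the whole argument reduces to a single observation, namely that among the $A^C$-cells of the map $\phi$ produced by Proposition~\ref{prop:3grid} one can find a $2\times 2$ ``rectangle'' of cells — here $(0,0),(3,0),(0,3),(3,3)$ — all of which receive the value $A$ (using the freedom in $\phi(0,0)$), so that the four corresponding level-$m$ values are pinned into $\set{0,1}$ and the cross-term is squeezed by the parallelogram identity~\eqref{eq:expand}. The remaining points are bookkeeping: checking the invariance of the hypotheses under relabelling and the clean-below-$m$ propagation in the first step. (For $p\leq 5$ the statement is vacuous, as then $\set{-2,-1,0,1,2}=\F_p$, but the argument above requires no case distinction on $p$.)
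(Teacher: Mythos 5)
Your proposal is correct and follows essentially the same route as the paper: relabel $(i,j)$ to $(3,3)$, take the map $\phi$ from Proposition \ref{prop:3grid} with $\phi(0,0)=A$, realise it via shattering, note that $z$, $x_3+z$, $y_3+z$, $x_3+y_3+z$ are all in $A$ and vanish on $Q_1,\dots,Q_{m-1}$, so each has $Q_m$-value in $\set{0,1}$, and then squeeze $\mu_{3,3}^{\bracketed{m}}$ via \eqref{eq:expand}. The only differences are cosmetic: you spell out the extension of $\phi$ to $[0,k]^2$, the invariance under relabelling, and the propagation of vanishing to $x_3+y_3+z$, all of which the paper leaves implicit.
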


\begin{proof}
    Fix $(i, j) \in [k]^2$ which, by relabelling, may be assumed to be equal to $(3, 3)$. Let $\phi$ be the containment map on $[0, 3]^2$ given by Proposition \ref{prop:3grid} with $\phi(0,0) = A$. Since $(X,Y)$ is quadratically shattered by $A$, there exists some $z \in \vecsp{p}{n}$ realising $\phi$. Then for all $t < m$, $Q_t(z) = Q_t(x_3+z) = Q_t(y_3+z) = Q_t(x_3+y_3+z) = 0$ by Proposition \ref{prop:3grid}\ref{prop:gridlemma1}. Moreover, the definition of $\phi$ in the proof of Proposition \ref{prop:3grid} means that all of $z, x_3+z, y_3+z$, and $x_3+y_3+z$ are contained in $A$. Then
    \begin{equation*}
        Q_m^{{\color{black} = 0, 1}}(x_3+z) + Q_m^{{\color{black} = 0, 1}}(y_3+z) - Q_m^{{\color{black} = 0, 1}}(z) + \mu^\bracketed{m}_{3,3} \in \set{0,1},
    \end{equation*}
    and hence $\mu^\bracketed{m}_{3,3} \in \set{0,1} - \set{0,1} - \set{0,1} + \set{0,1} = \set{-2, -1, 0, 1, 2}.$
\end{proof}

\begin{proof}[Proof of Theorem \ref{theorem:upper}]
    Let $k = br(5; 3, 3)$, and consider sets $X = \set{x_0, x_1, \ldots, x_k}$ and $Y = \set{y_0, y_1, \ldots, y_k}$, with $x_0 = y_0 = 0$. Suppose that $(X,Y)$ is quadratically shattered by $A$. We will prove by induction that for all $(i, j) \in [k]^2$ and all $t \in [n]$, the cross-term $\mu_{i,j}^\bracketed{t} = 2 x_i^T M_t y_j$ is equal to $0$.

    For the inductive step, assume that the desired conclusion holds for all $t < m$, where $m \leq n$. The goal is to show that this property can be extended to $t = m$. Note that the statement is trivially true for $m=1$, which serves as the base case.
    
    The inductive hypothesis implies that the assumptions of Lemma \ref{lemma:5colour} are satisfied, and therefore each $\mu_{i,j}^\bracketed{m}$ takes one of at most five possible values. If $(i,j) \in [k]^2$ is assigned the value of $\mu_{i,j}^\bracketed{m}$ as a colour, then by the choice of $k$, there is a monochromatic $3 \times 3$ subgrid. Without loss of generality, assume this subgrid is $[3]\times [3]$, and let $\lambda_m$ denote its colour. Proposition \ref{prop:3grid} now gives a containment map $\phi$ on $[0, 3]^2$ such that, for all $t \leq m$, any $z$ realising $\phi$ must satisfy $\lambda_t = Q_t(z) = Q_{t}(x_i+y_0+z) = Q_t(x_0+y_i+z) = 0,$
    and this still holds if the value of $\phi(0,0)$ is changed. The following claim shows that the entire grid must have the same colour as the chosen $3 \times 3$ subgrid, namely $\lambda_m=0$.
    \begin{nclaim}
        For all $x \in X$ and $y \in Y$, $2 x^T M_m y = 0$.
    \end{nclaim}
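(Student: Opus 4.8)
The plan is to fix $m$, assume inductively that $\mu^{(t)}_{i,j}:=2x_i^TM_ty_j=0$ for all $t<m$ and all $i,j\in[0,k]$, and exploit the containment map $\phi$ supplied by Proposition~\ref{prop:3grid}: applied to the monochromatic subgrid $[3]^2$, whose colour $\lambda_m$ it forces to be $0$, every realiser $z$ of $\phi$ satisfies $Q_t(z)=Q_t(x_a+z)=Q_t(y_b+z)=Q_t(x_a+y_b+z)=0$ for all $t\le m$ and all $a,b\in\{0,1,2,3\}$ (the last equality from \eqref{eq:expand1}). The decisive freedom is that $(X,Y)$ is quadratically shattered, so I may prescribe a containment map on all of $[0,k]^2$ — in particular, any extension of $\phi$ — and be handed a realiser $z$; since the $\mu^{(m)}_{i,j}$ do not depend on $z$, constraints coming from several different extensions may be intersected. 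The goal $\mu^{(m)}_{i,j}=0$ is trivial when $i=0$ or $j=0$, so only $i,j\in[k]$ matter.

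\emph{Phase 1: the rows and columns meeting the subgrid.} Fix $i\in[k]$; I would first show $\mu^{(m)}_{i,b}=0$ for $b\in\{1,2,3\}$, and then the symmetric argument — transposing containment maps and using that each $M_t$ is symmetric, which turns the problem into the same one for the pair $(Y,X)$ and its monochromatic subgrid — gives $\mu^{(m)}_{a,j}=0$ for $a\in\{1,2,3\}$, $j\in[k]$ as well. Writing $\alpha_t:=Q_t(x_i+z)$ and applying \eqref{eq:expand} with the vanishing above, a realiser $z$ of an extension of $\phi$ satisfies $Q_t(x_i+y_b+z)=\alpha_t+\mu^{(t)}_{i,b}$ for each $b\in\{0,1,2,3\}$, i.e.\ $\alpha_t$ when $t<m$ and $\alpha_m+\mu^{(m)}_{i,b}$ when $t=m$. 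Prescribing a mismatched pattern on the new cells $(i,0),\dots,(i,3)$ forces $\alpha_t=0$ for all $t<m$ (two points with identical coordinates below level $m$ cannot lie on opposite sides of $A$), after which one runs a case analysis on the level-$m$ values, in the spirit of the proof of Proposition~\ref{prop:3grid} and using the freedom there to reset $\phi(0,0)$, to conclude $\mu^{(m)}_{i,b}=0$.

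\emph{Phase 2: the rest of the grid.} Given Phase 1, for any $i,j\in[k]$ the $3\times3$ subgrid $\{x_i,x_1,x_2\}\times\{y_1,y_2,y_3\}$ has every cross-term $\mu^{(m)}$ equal to $0$, hence is monochromatic, so Proposition~\ref{prop:3grid} applied to it produces a containment map $\phi'$ whose realisers additionally satisfy $Q_t(x_i+z)=0$ for all $t\le m$. Extending $\phi'$ by the single column $j$ with $(0,j)\mapsto A$ and $(1,j)\mapsto A^C$, and invoking $\mu^{(m)}_{1,j}=0$ from Phase 1, forces $Q_t(y_j+z)=0$ for $t<m$ and then $Q_m(y_j+z)=0$ for the realiser (the points $y_j+z$ and $x_1+y_j+z$ have identical coordinates through level $m$ yet must lie on opposite sides of $A$, and $x_1+y_j+z$ has level-$m$ coordinate $Q_m(y_j+z)+\mu^{(m)}_{1,j}=Q_m(y_j+z)$). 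Consequently $Q_t(x_i+y_j+z)=0$ for $t<m$ while $Q_m(x_i+y_j+z)=\mu^{(m)}_{i,j}$; the extension with $(i,j)\mapsto A$ then gives $\mu^{(m)}_{i,j}\in\{0,1\}$ and the extension with $(i,j)\mapsto A^C$ gives $\mu^{(m)}_{i,j}\ne1$, whence $\mu^{(m)}_{i,j}=0$. Together with Phase 1 this settles all $i,j$.

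The main obstacle is Phase 1. Forcing the low coordinates $Q_t(x_i+z)$ ($t<m$) of the new reference point to vanish is immediate, but its level-$m$ coordinate $\alpha_m=Q_m(x_i+z)$ is not pinned down by $\phi$, so each single extension yields only a weak constraint of the form $\mu^{(m)}_{i,b}\in\{0,1\}-\alpha_m$ or $\mu^{(m)}_{i,b}\ne 1-\alpha_m$ with $\alpha_m$ unknown. Eliminating this unknown — by combining several carefully chosen extensions together with the $\phi(0,0)$-flexibility of Proposition~\ref{prop:3grid} — is the technical heart of the claim, and is the analogue here of the nested case analysis carried out in the proof of that proposition.
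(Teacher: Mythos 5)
Your plan has the right architecture, and your Phase 2 is fine: given that $\mu^{(m)}_{a,j}=0$ for $a\in\{1,2,3\}$ and all $j$, applying Proposition \ref{prop:3grid} to the zero-coloured subgrid $\{x_i,x_1,x_2\}\times\{y_1,y_2,y_3\}$ and probing the single column $j$ with $(0,j)\mapsto A$, $(1,j)\mapsto A^C$ does force $Q_t(y_j+z)=0$ for all $t\le m$, after which the two extensions $(i,j)\mapsto A$ and $(i,j)\mapsto A^C$ pin $\mu^{(m)}_{i,j}=0$; this is a legitimate (and slightly leaner) reorganisation of the paper's argument, which instead grows the zero-coloured grid one index at a time and treats the diagonal entry $\mu^{(m)}_{i,i}$ separately. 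The problem is that everything hinges on your Phase 1, and Phase 1 is asserted rather than proved. You correctly observe that a single extension only yields constraints of the form $\mu^{(m)}_{i,b}\in\{0,1\}-\alpha_m$ or $\mu^{(m)}_{i,b}\neq 1-\alpha_m$ with $\alpha_m=Q_m(x_i+z)$ unknown and realiser-dependent, and you then declare that eliminating $\alpha_m$ "is the technical heart of the claim" without carrying it out. That elimination is exactly where the paper's proof of the claim does its work, so as written there is a genuine gap at the central step.

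For comparison, the paper resolves the analogous (transposed) situation by a joint, adaptive assignment of the four cells meeting the new index: with three rows $x_1,x_2,x_j$ whose values $Q_t(x_\cdot+z)$ vanish for all $t\le m$ by Proposition \ref{prop:3grid}, it sets $\phi'(1,i)\neq\phi'(2,i)$, chooses $\phi'(j,i)$ adversarially as a function of the fixed value $\mu^{(m)}_{j,i}$ together with $\phi'(0,i)=A$ to force $Q_m(x_0+y_i+z)=1$ and hence $\mu^{(m)}_{j,i},\mu^{(m)}_{1,i},\mu^{(m)}_{2,i}\in\{\pm 1\}$, and then kills the two remaining possibilities by re-assigning $(j,i)$ and $(0,i)$ in two further extensions (via \eqref{eq:expand}). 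Your Phase 1 is precisely the transpose of this configuration (three clean columns $y_1,y_2,y_3$, one new row $x_i$, cells $(i,0),\dots,(i,3)$), so the same case analysis would complete your argument; but the $\phi(0,0)$-flexibility of Proposition \ref{prop:3grid}, which you point to, is not the mechanism that does it -- the paper uses that flexibility only for statements about $Q_m(z)$ and $\mu$ on the subgrid itself, while the elimination of $\alpha_m$ requires the multi-cell, value-dependent case analysis described above, none of which appears in your write-up. Until that analysis is supplied, the claim is not proved.
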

    \begin{proof}[Proof of Claim]
    \renewcommand{\qedsymbol}{}
        Note that $x_i^T M_m 0 = 0^T M_m y_j = 0$ for any $(i, j) \in [k]^2$, so the whole of the $4 \times 4$ subgrid $\set{0, x_1, \ldots, x_3} \times \set{0, y_1, \ldots, y_3}$ has colour 0. Taking this as the base case, the subgrid can be grown by induction. Specifically, assuming that the grid $\set{0, x_1, \ldots, x_{i-1}} \times \set{0, y_1, \ldots, y_{i-1}}$ is monochromatic with colour $0$, we will show that so is $\set{0, x_1, \ldots, x_{i}} \times \set{0, y_1, \ldots, y_{i}}$.
        
        Fix some $j < i$ and suppose that $\mu^\bracketed{m}_{j,i} \neq 0$. Relabelling if necessary, assume that $j \neq 1, 2$. By inductive hypothesis, the subgrid $\set{x_1, x_2, x_j} \times \set{y_1, y_2, y_3}$ is monochromatic and, moreover, the other conditions of Proposition \ref{prop:3grid} are satisfied. This means that there is a containment map $\phi'$ on $\set{0, 1, 2, j} \times [0,3]$ such that any $z$ realising it satisfies
        $Q_t(x_1+y_0+z) = Q_t(x_2+y_0+z) = Q_t(x_j+y_0+z) = Q_t(z) = 0.$
        
        As a result, equation \eqref{eq:expand} yields the following for all $t \leq m$:
        \begin{align}
            \label{eq:1i} Q_t(x_1+y_i+z) &= Q_t(x_0+y_i+z) + \mu^\bracketed{t}_{1,i};\\
            \label{eq:2i} Q_t(x_2+y_i+z) &= Q_t(x_0+y_i+z) + \mu^\bracketed{t}_{2,i};\\
            \label{eq:ji} Q_t(x_j+y_i+z) &= Q_t(x_0+y_i+z) + \mu^\bracketed{t}_{j,i}.
        \end{align}
        Since the values of $\phi'$ are only fixed on $\set{0, 1, 2, j} \times [0,3]$, $\phi'$ can be extended to the rest of the grid in any manner without losing \eqref{eq:1i}-\eqref{eq:ji}.
        
        First, set $\phi'(1, i)$ and $\phi'(2, i)$ to have different values. For all $t < m$, this immediately implies that $Q_t(x_0+y_i+z) = 0$ since $\mu^\bracketed{t}_{1,i} = \mu^\bracketed{t}_{2,i} = 0$ by the inductive hypothesis.
        
        Next, set $\phi'(j, i) = A^C$ if $\mu^\bracketed{m}_{j,i} = 1$, and $\phi'(j, i) = A$ otherwise. Additionally, let $\phi'(0, i) = A$. With these choices, equations \eqref{eq:1i}-\eqref{eq:ji} mean that we cannot have $Q_{m}(x_0+y_i+z) = 0$, so it must be equal to $1$. In addition, if $\phi'(j, i) = A$, then $\mu^\bracketed{m}_{j,i} = -1$, so the only possible values $\mu^\bracketed{m}_{j,i}$ can take are $\pm 1$.

        The only condition made on $\phi'(1, i)$ and $\phi'(2, i)$ so far is that they take different values. Then one of them, e.g.~$\phi'(1,i)$ must be equal to $A^C$, which implies $\mu^\bracketed{m}_{1,i} \neq 0$ by \eqref{eq:1i}. On the other hand, swapping the assignments of $(1,i)$ and $(2,i)$ changes none of the previous deductions, so it follows that also $\mu^\bracketed{m}_{2,i} \neq 0$. Moreover, arguing as in the preceding paragraph, both $\mu^\bracketed{m}_{1,i}$ and $\mu^\bracketed{m}_{2,i}$ can only take values of $\pm 1$.

        \textbf{Case 1: $\mu^\bracketed{m}_{ji} = 1$.} Redefine $\phi'(j, i) = A$ while keeping $\phi'(0, i) = A$. This results in $Q_{m}(x_0+y_i+z) = 0$ by \eqref{eq:ji} and, consequently, $Q_{m}(x_1+y_i+z) = \mu^\bracketed{m}_{1,i}$ by \eqref{eq:1i}. Setting $\phi(1, i)$ so that it must differ from $\mu^\bracketed{m}_{1,i}$ gives a contradiction.

        \textbf{Case 2: $\mu^\bracketed{m}_{ji} = -1$.} An argument symmetric to Case 1 shows that $\mu^\bracketed{m}_{1,i} = \mu^\bracketed{m}_{2,i} = - 1$ also. Redefine $\phi'(j, i) = A$ and $\phi'(0, i) = A^C$. This implies $Q_{m}(x_0+y_i+z) = 2$ by \eqref{eq:ji}, but then all three equations \eqref{eq:1i}-\eqref{eq:ji} are equal to 1 and, therefore, correspond to the same containment value, in contradiction with $\phi'(1, i) \neq \phi'(2, i)$.

        With both these options eliminated, the only possible conclusion is that $\mu^\bracketed{m}_{j,i}$ must have been $0$ in the first place, and hence the subgrid $\set{0, x_1, \ldots, x_{i-1}} \times \set{0, y_1, \ldots, y_{i}}$ is monochromatic. By symmetry, $\set{0, x_1, \ldots, x_{i}} \times \set{0, y_1, \ldots, y_{i-1}}$ is likewise monochromatic. It remains to show that $\mu_{i,i}^\bracketed{m} = 0$.

        By Proposition \ref{prop:3grid}, there exists a containment map $\phi''$ on $[0,3] \times \set{0, 1, 2, i}$ such that $Q_{t}(x_0+y_1+z) = Q_{t}(x_0+y_2+z) = Q_t(x_0+y_i+z) = Q_t(z) = 0$ for any $z$ realising $\phi''$ and all $t \leq m$. With the information already deduced, this implies
        \begin{align}
            \label{eq:i1} Q_{t}(x_i+y_1+z) &= Q_{t}(x_i+y_0+z)\\
            \label{eq:i2} Q_{t}(x_i+y_2+z) &= Q_{t}(x_i+y_0+z)\\
            \label{eq:ii} Q_{t}(x_i+y_i+z) &= Q_{t}(x_i+y_0+z) + \mu^\bracketed{t}_{i,i}.
        \end{align}
        Setting $\phi''(i,1) \neq \phi''(i,2)$ forces $Q_{t}(x_i+y_0+z) = 0$ for all $t \leq m$. However, since \eqref{eq:ii} must be satisfied for any choice of $\phi''(i, i)$, the only conclusion is that $\mu_{i,i}^\bracketed{m} = 0$, which completes the proof of the claim.
    \end{proof}
     It follows, by induction, that if $(X,Y)$ is quadratically shattered by $A$, then $\mu^\bracketed{t}_{j,i} = 0$ for all $t \leq n$ and $(i, j) \in [0,k]^2$. By one final application of Proposition \ref{prop:3grid}, there is a containment map $\phi$ on $[0,3]^2$ such that $Q_t(z) = 0$ for all $t \leq n$, and this holds even if the value of $\phi(0, 0)$ is changed. However, no $z$ with this property can ever realise $\phi(0,0) = A$, so we must conclude that $A$ cannot, in fact, quadratically shatter $(X, Y)$.
\end{proof}

\section{A lower bound on the $\VC_2$-dimension of the quadratic Green-Sanders example}\label{sec:lower}

Considering that the $\VC$-dimension of $\GS(p,n)$ is at most $3$, it is tempting to conjecture that the true $\VC_2$-dimension of $\QGS(p,n)$ is much smaller than $br(5;3,3)$. This section gives a construction which shows that the $\VC_2$-dimension of $\QGS(p,n)$ is at least $3$ for any $p > 2$. The goal is to exhibit a pair of sets $X,Y$ of size $\abs{X}=\abs{Y}=3$ such that $A = \QGS(p,n)$ quadratically shatters $(X,Y)$.

Given a sequence $i_1, \ldots, i_k \in [n]$, let $Q_{i_1, \ldots, i_k}$ denote the joint function $(Q_{i_1}, \ldots, Q_{i_k})$ taking values in $\vecsp{p}{k}$. Note that if $x \in \vecsp{p}{n}$ and $Q_{1, \ldots, k}(x)$ is non-zero for some $k < n$, then $Q_{1, \ldots, k}(x)$ completely determines whether $x \in \QGS(p,n)$. This simple observation suggests that it may be sufficient to consider only a small subset of $\set{Q_i}$ in order to quadratically shatter a pair of sets.

Additionally, in order to have control over whether an element of $\vecsp{p}{n}$ is in $A$, it is useful to consider \emph{quadratic factors}, the type of partition produced by the quadratic arithmetic regularity lemma.

\begin{definition}[Quadratic Factor]
\label{def:poly-factor}
A \emph{quadratic factor $\mathcal{B}$} of complexity $D$ is a partition of $\vecsp{p}{n}$ into the level sets of some polynomials $P_1, \ldots, P_D: \vecsp{p}{n} \rightarrow \vecsp{p}{}$ of degree at most $2$. In other words, the parts of $\mathcal{B}$ have the form $B = \set{x \in \vecsp{p}{n} \st (P_1(x), \ldots, P_D(x))=\vecline{c}}$, where $\vecline{c} \in \vecsp{p}{D}$. Such a part is called an \emph{atom of $\mathcal{B}$} and $\vecline{c}$ is referred to as its \emph{label}.
\end{definition}

By definition, an atom $B$ with label $\vecline{c}$ is non-empty if and only if there is an element $x \in \vecsp{p}{n}$ that simultaneously satisfies $P_1(x) = c_1, \ldots, P_D(x) = c_D$. It turns out that when the quadratic polynomials inducing $\mathcal{B}$ are defined by matrices of sufficiently high rank, we can ensure that all atoms are non-empty via the following lemma \cite[Lemma 4.2]{montreal}.

\begin{lemma}[Size of atoms]
\label{lemma:quadratic-atom-size}
    Let $\mathcal{B}$ be a quadratic factor defined by polynomials $L_1, \ldots, L_l, P_1, \ldots, P_q$, where $\set{L_i}$ are linearly independent linear polynomials, and $\set{P_i}$ are quadratic polynomials of the form $x^T M_i x$ for some matrix $M_i$. Then for any $\vecline{c} \in \vecsp{p}{D}$, the atom $B$ with label $\vecline{c}$ satisfies $\babs{\abs{B} - p^{-(l+q)}\abs{G}} \leq p^{-r/2}\abs{G}$, where
    $r = \min({\rank(\lambda_1 M_1 + \ldots + \lambda_q M_q) \st \underline{\lambda} \in \vecsp{p}{q}\backslash\set{0}}).$
\end{lemma}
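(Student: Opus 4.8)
The plan is to run the standard Gauss-sum (additive-character) computation that underlies estimates of this kind. Write $G = \vecsp{p}{n}$, so that $\abs{G} = p^n$, split the label as $\vecline{c} = (c_1,\dots,c_l,d_1,\dots,d_q)$ into the coordinates coming from the $L_i$ and those coming from the $P_j$, and set $e_p(t) = e^{2\pi i t/p}$. Applying the orthogonality relation $\id{a=b} = p^{-1}\sum_{\xi\in\F_p} e_p(\xi(a-b))$ to each of the $l+q$ conditions defining $B$ gives
\[
\abs{B} \;=\; \frac{1}{p^{l+q}} \sum_{\alpha\in\vecsp{p}{l}}\ \sum_{\beta\in\vecsp{p}{q}} e_p\!\left(-\sum_{i} \alpha_i c_i-\sum_{j} \beta_j d_j\right) S(\alpha,\beta),
\]
where $S(\alpha,\beta) = \sum_{x\in G} e_p\big(\sum_{i} \alpha_i L_i(x) + \sum_{j} \beta_j P_j(x)\big)$. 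The term $(\alpha,\beta)=(0,0)$ contributes precisely $p^{-(l+q)}\abs{G}$, which is the claimed main term, so it suffices to show that $\abs{S(\alpha,\beta)} \le p^{n-r/2}$ for every other pair; the triangle inequality then yields $\babs{\abs{B} - p^{-(l+q)}\abs{G}} \le p^{-(l+q)}(p^{l+q}-1)\,p^{n-r/2} < p^{-r/2}\abs{G}$.

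First I would dispose of the terms with $\beta = 0$ and $\alpha \ne 0$: for these the exponent $\sum_i \alpha_i L_i(x)$ is a nonconstant affine function of $x$ — this is precisely where linear independence of the $L_i$ is used — so $S(\alpha,0) = 0$ and such terms contribute nothing. It remains to bound $\abs{S(\alpha,\beta)}$ when $\beta \ne 0$. Here the quadratic part of the exponent is $\sum_j \beta_j P_j(x) = x^T M x$ with $M = \sum_j \beta_j M_j$; replacing $M$ by its symmetrisation $\tfrac12(M+M^T)$ leaves the quadratic form unchanged since $p$ is odd (and is unnecessary in this paper's applications, where the $M_j$ are already symmetric), so that the hypothesis gives $\rho := \rank M \ge r$. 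Writing the linear part of the exponent as $\gen{u,x} + v$, we are reduced to bounding the quadratic Gauss sum $\babs{\sum_{x\in G} e_p(x^T M x + \gen{u,x})}$ by $p^{\,n-\rho/2}$.

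This Gauss-sum estimate is the heart of the matter, and the step I expect to require the most care. I would obtain it in the classical way: since $p$ is odd, $M$ is congruent over $\F_p$ to a diagonal matrix, so a linear change of variables turns the quadratic form into $\sum_{s=1}^{\rho} a_s x_s^2$ with every $a_s \ne 0$ and the linear form into $\sum_{s=1}^{n} b_s x_s$. The sum then factors over the coordinates: each of the $n-\rho$ radical coordinates contributes $p$ or $0$ according as its $b_s$ vanishes or not, and each of the remaining $\rho$ coordinates contributes, after completing the square, a one-variable Gauss sum of modulus $\sqrt p$. Hence the total modulus is either $0$ or $p^{\,n-\rho}\cdot p^{\rho/2} = p^{\,n-\rho/2} \le p^{\,n-r/2}$, as required. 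Feeding this bound into the first paragraph completes the proof; apart from the Gauss-sum estimate the argument is pure bookkeeping, the only subtlety worth flagging being the symmetric-versus-general-matrix point, which the hypothesis that $p$ is odd resolves.
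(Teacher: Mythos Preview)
The paper does not give its own proof of this lemma; it is quoted verbatim from an external source (the reference labelled \texttt{montreal}, Lemma~4.2) and used as a black box. Your argument is the standard character-sum proof that one finds in that literature and is correct as written, so there is nothing to compare against here.

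One remark on the point you already flagged: the lemma as stated in the paper allows the $M_i$ to be arbitrary matrices, in which case the quantity governing the Gauss-sum bound is the rank of the symmetrisation $\tfrac12(M+M^T)$ rather than of $M=\sum_j\beta_j M_j$ itself, and these can differ. So in full generality the hypothesis ought to be read as a rank condition on the associated symmetric bilinear forms. In every application the paper makes, however, the $M_i$ are symmetric by construction (they come from Definition~\ref{def:quadratic-gs}), so the distinction is immaterial, exactly as you note.
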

In particular, if the quadratic polynomials in Lemma \ref{lemma:quadratic-atom-size} are taken to be a subset of $Q_1, \ldots, Q_n$, then $r = n$ so all atoms of $\mathcal{B}$ are non-empty as long as the complexity of $\mathcal{B}$ is less than $n/2$.

While the goal of this section is to prove a lower bound of $3$, the general approach will be demonstrated by first obtaining a weaker bound of $2$. The main steps of the proof below are labelled and highlighted in bold to serve as a blueprint for the proof of the eventual stronger result.
\begin{theorem}
\label{theorem:2bound}
For all primes $p>2$ and all $n \geq 13$, $\QGS(p,n)$ has $\VC_2$-dimension at least $2$.
\end{theorem}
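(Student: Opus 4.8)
The plan is to explicitly construct a pair of sets $X = \{x_0, x_1, x_2\}$ and $Y = \{y_0, y_1, y_2\}$ in $\vecsp{p}{n}$, with $x_0 = y_0 = 0$, that is quadratically shattered by $A = \QGS(p,n)$, and to verify the shattering by working in a small fixed number of coordinates. The key reduction, noted in the excerpt, is that whenever $Q_{1,\dots,k}(v)$ is nonzero for some small $k$, membership of $v$ in $\QGS(p,n)$ is determined entirely by the first $k$ coordinates; so I would aim to choose $X$, $Y$ supported on the first few standard basis vectors and choose the realising elements $z$ supported there as well, reducing everything to a concrete computation in $\QGS(p,k)$ for $k$ around $6$ or so (the hypothesis $n \geq 13$ presumably allows room for Lemma \ref{lemma:quadratic-atom-size} to guarantee nonempty atoms of a quadratic factor of the requisite complexity). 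Since $\abs{X} = \abs{Y} = 2$ in containment-map terms (indices $\{0,1\}$), there are $2^{4} = 16$ containment maps on $[0,1]^2$ to realise, and by the translation-invariance remark it suffices to handle those, noting $x_0 + y_j + z$ and $x_i + y_0 + z$ reduce to $y_j + z$ and $x_i + z$.

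The main steps, in order, are: (1) fix a small ambient dimension $k$ and write down explicit $x_1, x_2, y_1, y_2$ in terms of $w_1, \dots, w_k$, together with the cross-terms $2 x_i^T M_t y_j$ — the construction should be chosen so that these cross-terms are controlled (ideally the relevant ones vanish, so that $Q_t(x_i + y_j + z) = Q_t(x_i + z) + Q_t(y_j + z) - Q_t(z)$ by \eqref{eq:expand}); (2) for each of the $16$ containment maps $\phi$ on $[0,1]^2$, produce a candidate $z = z_\phi$ and check that $x_i + y_j + z_\phi \in \phi(i,j)$ for all four pairs $(i,j)$; (3) to make step (2) manageable, set up a quadratic factor $\mathcal{B}$ on $\vecsp{p}{n}$ whose defining polynomials include the relevant $Q_t$'s (and perhaps a few linear forms), observe via Lemma \ref{lemma:quadratic-atom-size} that all atoms are nonempty provided the complexity is below $n/2$, and then exhibit $z_\phi$ as any element of an appropriately-labelled atom — this is exactly how the "control over membership in $A$" is exercised. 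The bold-labelled structure the author mentions presumably corresponds to: \textbf{(Setup)} choosing $X, Y$; \textbf{(Factor)} building $\mathcal{B}$ and invoking nonemptiness of atoms; \textbf{(Realisation)} exhibiting $z_\phi$ atom-by-atom for each $\phi$.

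The hard part is step (2)/(Realisation): one must arrange $X$ and $Y$ so that the four values $Q_t(x_i + z) + Q_t(y_j + z) - Q_t(z)$ (for the lowest nonzero level $t$) can be simultaneously steered into $\{0,1\}$ or its complement in any of the $16$ prescribed patterns, while ensuring that the earlier levels $Q_{t'}(x_i+y_j+z) = 0$ for $t' < t$ so that level $t$ is indeed the deciding one — i.e. one needs enough independent "knobs" in $z$ to hit the required atom of $\mathcal{B}$ for every $\phi$, which is why a moderately large but bounded dimension is needed and why the cross-terms must be made to vanish or be constant. I expect the verification to split into a handful of cases according to the pattern of $\phi$ (by symmetry under swapping $x_1 \leftrightarrow x_2$, $y_1 \leftrightarrow y_2$, and under complementation, the $16$ maps collapse to only a few essentially-distinct cases), and in each case the existence of $z_\phi$ follows formally from nonemptiness of the relevant atom of $\mathcal{B}$ rather than from an ad hoc calculation.
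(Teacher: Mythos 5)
Your overall skeleton does match the paper's: arrange for the cross-terms $2x^TM_ty$ to vanish so that $Q_{1,2}(x+y+z)=Q_{1,2}(x+z)+Q_{1,2}(y+z)-Q_{1,2}(z)$, build a quadratic factor containing $Q_1,Q_2$ together with a few linear forms, invoke Lemma \ref{lemma:quadratic-atom-size} to get a point $z$ in any prescribed atom, and finish with a finite case check. But there is a genuine gap in how you propose to set this up. You want to take $x$, $y$ (and even the realising $z$) supported on the first few standard basis vectors and ``reduce to a concrete computation in $\QGS(p,k)$''. The matrices $M_1,\dots,M_n$ are not explicit --- they are an arbitrary basis all of whose nonzero combinations have full rank --- so for standard basis vectors the cross-terms $x^TM_ty$ are just unknown entries of $M_t$ and cannot be made to vanish by fiat; worse, confining $z$ to a fixed $k$-dimensional coordinate subspace destroys exactly the high-rank property that makes the atoms nonempty (the restriction of a rank-$n$ symmetric form to a coordinate subspace can be degenerate, even identically zero), so the ``knobs'' you need may simply not exist there. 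The paper's resolution is different: take $x$ arbitrary nonzero, choose $y$ in the codimension-at-most-$5$ subspace $\gen{x, M_1^{-1}M_2x, M_2^{-1}M_1x, M_1x, M_2x}^\perp$, which both kills the cross-terms $x^TM_1y=x^TM_2y=0$ and makes the linear forms $2M_1x,2M_2x,2M_1y,2M_2y$ independent, and then let $z$ range over \emph{all} of $\vecsp{p}{n}$, found in a nonempty atom of the complexity-$6$ factor (this is where $n\geq 13$ enters, via $6<n/2$).

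A second gap is in your Realisation step: the existence of $z_\phi$ does not ``follow formally from nonemptiness of the relevant atom''. The atom lets you prescribe $Q_{1,2}(z)$, $Q_{1,2}(x+z)$, $Q_{1,2}(y+z)$ freely, but $Q_{1,2}(x+y+z)$ is then forced, so one must still verify that every membership pattern at $(1,0),(0,1),(1,1)$ can be induced by some choice of values in $\vecsp{p}{2}$ (with $(0,0)$ handled by a free second coordinate when the first is $0$); the paper does this with an explicit six-row table, e.g.\ taking $Q_{1,2}(x+z)=(2,*)$, $Q_{1,2}(y+z)=(-1,*)$, $Q_{1,2}(z)=(0,*)$ to flip the containment at $(1,1)$ against both $(1,0)$ and $(0,1)$. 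Note also that complementation is \emph{not} a symmetry of $\QGS(p,n)$ (membership demands the value $1$, not merely nonzero, at the first nonzero level), so the sixteen maps do not collapse as far as you suggest; and your references to $x_2,y_2$ describe size-$3$ sets, whereas $\vctwo$-dimension at least $2$ requires shattering $X=\set{0,x}$, $Y=\set{0,y}$ with the sixteen maps on $[0,1]^2$, which is what your count correctly reflects.
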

\begin{proof}
We are looking to find $x, y \in \vecsp{p}{n}$ such that the pair of sets $(X, Y)$ given by $X = \set{0, x}$ and $Y = \set{0, y}$ is quadratically shattered by $A$.

\textbf{Picking $x$ and $y$.}
Let $x$ be any non-zero vector in $\vecsp{p}{n}$. Observe that the vectors $M_1 x$ and $M_2 x$ are linearly independent, since any non-zero linear combination of $M_1$ and $M_2$ has rank $n$ and is therefore an invertible matrix.

Let $H_x = \gen{M_1x, M_2x}^\perp$. This is a subspace of codimension 2 and so is any set of the form $M H_x = \set{Mv: v \in H_x}$, where $M$ is an invertible matrix. Then
$$H^* = M_1^{-1} H_x \cap M_2^{-1} H_x \cap H_x = \gen{x, M_1^{-1}M_2x, M_2^{-1}M_1x, M_1x, M_2x}^\perp$$
is a subspace of codimension at most 5. This means that for any $n \geq 6$, there exists a non-zero $y \in H^*$, orthogonal to each of the vectors on the right-hand side above.

\textbf{Defining a quadratic factor.}
Note that the choice of $x$ and $y$ above ensures that the set $L = \set{2M_1 x, 2M_2 x, 2M_1 y, 2M_2 y}$ is linearly independent. Indeed, if
$$\lambda_1 M_1 x + \lambda_2 M_2 x = \lambda_3 M_1 y + \lambda_4 M_2 y$$
for some $\lambda_1, \ldots, \lambda_4 \in \vecsp{p}{}$, then each side of the equation must be zero because the two sides are orthogonal to each other. On the other hand, both $\set{M_1x, M_2x}$ and $\set{M_1y, M_2y}$ are linearly independent sets, so $\lambda_i = 0$ for all $i \in [4]$.

As a consequence, the quadratic factor $\mathcal{B}$ defined by $L \cup \set{Q_1, Q_2}$ has complexity $6$ and all of its atoms are non-empty by Lemma \ref{lemma:quadratic-atom-size} and the fact that $\mathcal{B}$ has rank $n \geq 13$. Additionally, $y \in H_x$ implies that $x^TM_1y = (M_1x)^Ty = 0$ and $x^TM_2y=0$.

\textbf{Shattering $(X, Y)$.} Let $\phi$ be any containment map on $[0,1]^2$. The goal is to exhibit a $z \in \vecsp{p}{n}$ such that $z \in \phi(0, 0)$, $x+z \in \phi(1, 0)$, $y+z \in \phi(0, 1)$, and $x+y+z \in \phi(1,1)$. This can be achieved by picking $z$ to lie in a specific atom of $\mathcal{B}$, as follows. Observe that
\begin{align}
    \label{l1}
    Q_{i}(x+z) &= Q_{i}(x) + Q_{i}(z) + 2(M_i x)^T z\\
    \label{l2}
    Q_{i}(y+z) &= Q_{i}(y) + Q_{i}(z) + 2(M_i y)^T z\\
    \label{l3}
    Q_{1,2}(x+y+z) &= Q_{1,2}(x+z) + Q_{1,2}(y+z) - Q_{1,2}(z)
\end{align}
for any $i \in \set{0,1}$. In particular, the value of $Q_{1,2}(x+y+z)$ depends solely on those of $Q_{1,2}(z)$, $Q_{1,2}(x+z)$, and $Q_{1,2}(y+z)$. While this may appear restrictive, note that $z$ can be picked in such a way that the values of $Q_{1,2}(x+z)$ and $Q_{1,2}(y+z)$ are independent of $Q_{1,2}(x), Q_{1,2}(y)$ and $Q_{1,2}(z)$ due to the cross-terms $(M_i x)^T z$ and $(M_i y)^T z$ in the corresponding equations. Specifically, given any $a_i, b_i, q_i \in \vecsp{p}{}$, it is possible to ensure that $Q_{1,2}(z) = (q_1,q_2)$, $Q_{1,2}(x+z) = (a_1,a_2)$ and $Q_{1,2}(y+z) = (b_1,b_2)$ by picking $z$ from the atom of $\mathcal{B}$ with label $(a_1 - q_1 - Q_1(x), a_2 - q_2 - Q_2(x), b_1 - q_1 - Q_1(y), b_2 - q_2 - Q_2(y), q_1, q_2).$ This achieves the desired assignment as may be verified by substitution into \eqref{l1} and \eqref{l2}.

Now all that remains is to make sure that $q_i$, $a_i$ and $b_i$ can always be picked in such a way that $\phi(1, 1)$ is realised. The table below shows one way of doing so, with $*$ standing for a `wildcard' value, where any choice is permissible.
\begin{center}
\begin{tabular}{|c|c|c||c|}
    \hline
    $Q_{1,2}(x+z)$ & $Q_{1,2}(y+z)$ & $Q_{1,2}(z)$ & $Q_{1,2}(x+y+z)$\\
    \hline\hline
    $1*$ & $0*$ & $0*$ & $1*$\\
    \hline
    $2*$ & $0*$ & $0*$ & $2*$\\
    \hline
    $0*$ & $1*$ & $0*$ & $1*$\\
    \hline
    $0*$ & $2*$ & $0*$ & $2*$\\
    \hline
    $2*$ & $-1*$ & $0*$ & $1*$\\
    \hline
    $1*$ & $1*$ & $0*$ & $2*$\\
    \hline
\end{tabular}
\end{center}
Here the first two rows correspond to the case when $x+y+z \in A$ if and only if $x+z \in A$; the next two rows apply when $x+y+z \in A$ if and only if $y+z \in A$; finally, the last two rows cover the case of $x+y+z$ having the opposite containment to both $x+z$ and $y+z$. This, in fact, covers all possible cases, and thus $(X, Y)$ can indeed be quadratically shattered by $A$. Hence the $\VC_2$-dimension of $A$ is at least $2$.
\end{proof}
To obtain a lower bound of $3$ on the $\VC_2$-dimension of $\QGS(p,n)$, the above argument can be extended by using a quadratic factor of higher complexity. Specifically, we will work with $Q_{1,2,3}$ rather than just $Q_{1,2}$.

\begin{proof}[Proof of Theorem \ref{theorem:lower}]
    Proceeding as before, it is possible to select $X = \set{0, x_1, x_2}$ and $Y = \set{0, y_1, y_2}$, where $x_1, x_2, y_1, y_2 \in \vecsp{p}{n}\backslash \set{0}$, such that the set
        $$L = \set{2 M_i x_j \st 1 \leq i \leq 3, 1 \leq j \leq 2} \cup \set{2 M_i y_j \st 1 \leq i \leq 3, 1 \leq j \leq 2}$$
    is linearly independent, and for each $i \in [3]$ and $j_1, j_2 \in [2]$, $x_{j_1}^T M_i y_{j_2} = 0$. Then the quadratic factor $\mathcal{B}$ defined by $L \cup \set{Q_1, \ldots, Q_3}$ has complexity 15, and all of its atoms are non-empty by Lemma \ref{lemma:quadratic-atom-size} and the fact that $\mathcal{B}$ has rank $n \geq 31$.

    Specifically, start by picking any linearly independent $x_1, x_2 \in \vecsp{p}{n}$ and noting that, for any $i \in [n]$, $M_i x_1$ and $M_i x_2$ are also linearly independent. Define the subspace $H_x = \gen{M_i x_j \st 1 \leq i \leq 3, 1 \leq j \leq 2}^\perp$ and pick two linearly independent $y_1, y_2$ from
    $$H_x \cap \bigcap_{i=1}^3 M^{-1}_i H_x = \gen{x_j, M_{i_1}^{-1} M_{i_2} x_j, M_{i_1}x_j \st 1 \leq i_1 \neq i_2 \leq 3, j = 1,2}^\perp,$$
    which is itself a subspace of codimension at most 20.

    As in the proof of Theorem \ref{theorem:2bound}, $\set{x_1, x_2}$ and $\set{y_1, y_2}$ chosen in this way have the property that for any $a_1, a_2, b_1, b_2, q \in \vecsp{p}{3}$, it is always possible to find a $z \in \vecsp{p}{n}$ such that $Q_{1,2,3}(x_i+z) = a_i$, $Q_{1,2,3}(y_j+z) = b_j$, and $Q_{1,2,3}(z) = q$ by picking $z$ from an atom of $\mathcal{B}$ with an appropriate label. Therefore, in order to quadratically shatter $(X, Y)$ by $A$, it is sufficient to exhibit, for each containment map $\phi$, values of $a_1, a_2, b_1, b_2, q \in \vecsp{p}{3}$ that realise it. While it is no longer practical to write down all necessary configurations in a single table, the number of separate cases to consider may be reduced to the following.
    
    \textbf{Case 1:} $\phi(1,0) = \phi(1, 1) = \phi(1, 2)$.
    Set $Q_1(y_1+z), Q_1(y_2+z), Q_1(x_2+z)$ and $Q_1(z)$ to zero
    and let $Q_1(x_1+z)$ be equal to either $1$ or $-1$, depending on $\phi(1, 0)$. Since
    $$Q_{1,2,3}(x_i+y_j+z) = Q_{1,2,3}(x_i+z) + Q_{1,2,3}(y_j+z) - Q_{1,2,3}(z),$$
    this immediately realises $\phi(1, 0)$, $\phi(1,1)$ and $\phi(1,2)$, regardless of the value of $Q_{2,3}$ at $x_1+z$, $x_1+y_1+z$ and $x_1 + y_2 +z$. To realise $\phi(2,0)$ and $\phi(0,0)$, select appropriate values from the following table.
    \begin{center}
    \begin{tabular}{|c|c||c|}
        \hline
        $Q_{2,3}(x_2+z)$ & $Q_{2,3}(z)$ & $Q_{2,3}(x_2+z)-Q_{2,3}(z)$\\
        \hline\hline
        $22$ & $11$ & $11$\\
        \hline
        $12$ & $01$ & $11$\\
        \hline
        $10$ & $0{-1}$ & $11$\\
        \hline
        $02$ & $-11$ & $11$\\
        \hline
    \end{tabular}
    \end{center}
    Finally, the rest of $\phi$ can be realised by choosing $Q_{2,3}(y_j+z)$ to take the value $0*$, $10$ or $-11$, as appropriate.

    \textbf{Case 2:} $\phi(1, 0) \neq \phi(1, 1) = \phi(1, 2)$, $\phi(2, 1) = \phi(2, 2)$.    
    Let $Q_{1,2,3}(y_j+z) = 001$ if $\phi(0,j)=A$, and $Q_{1,2,3}(y_j+z) = 002$ otherwise, for each $j \in [2]$. Then select $Q_{1,2,3}(z)$ and $Q_{1,2,3}(x_i+z)$ from one of the tables below, depending on the value of $\phi(0,0)$.
    \begin{table}[h!]
    \begin{subtable}[h]{0.49\textwidth}
        \centering
        \begin{tabular}{|c||c|}
            \hline
            $Q_{1,2}(x_i+z)$ & $Q_{1,2}(x_i+y_j+z)$\\
            \hline\hline
            $2*$ & $1*$\\
            \hline
            $11$ & $01$\\
            \hline
            $0*$ & ${-1}*$\\
            \hline
        \end{tabular}
        \caption{$Q_{1,2}(z) = 10$.}
    \end{subtable}
    \hfill
    \begin{subtable}[h]{0.49\textwidth}
        \centering
        \begin{tabular}{|c||c|}
            \hline
            $Q_{1,2}(x_i+z)$ & $Q_{1,2}(x_i+y_j+z)$\\
            \hline\hline
            $-1*$ & $0*$\\
            \hline
            $1*$ & $2*$\\
            \hline
            $01$ & $11$\\
            \hline
        \end{tabular}
        \caption{$Q_{1,2}(z) = -10$.}
    \end{subtable}
    \end{table}
    
    \textbf{Case 3:} $\phi(1, 0) \neq \phi(1, 1) = \phi(1, 2) = \phi(2,1) \neq \phi(2, 2)$. We may additionally assume that $\phi(0, 1) = \phi(1, 0)$, as otherwise $\phi(0, 1) = \phi(1, 1) = \phi(2, 1)$ so Case 1 would apply by symmetry.
    
    \textbf{Subcase 3.1: $\phi(2, 0) = \phi(0,0)$.} Set $Q_{1}(y_1+z)=Q_1(y_2+z)=0$, and $Q_{1,2,3}(x_2+z)=Q_{1,2,3}(z)= \pm 100$, according to $\phi(2, 0)$. This ensures that for each $j \in [2]$,
    $Q_1(x_2+y_j+z)=Q_1(x_2+z) + Q_1(y_j+z) - Q_1(z) = 0.$
    
    Now pick an appropriate value for $Q_{1,2}(x_1+z)$ from the table appearing in Case 2 in order to realise $\phi(1,1)$ and $\phi(1,2)$. This is possible because, by assumption, $\phi(1,0) \neq \phi(1, j)$ for each $j \in [2]$, and the rows of the table that satisfy this condition do not depend on the value of $Q_2(y_j)$. Finally, choose $Q_{2,3}(x_2+z)$ and $Q_{2,3}(y_j+z)$ from the following tables to realise $\phi(0,j)$ and $\phi(2,j)$.
    \begin{table}[h!]
    \begin{subtable}[h]{0.49\textwidth}
        \centering
        \begin{tabular}{|c||c|}
            \hline
            $Q_{2,3}(y_j+z)$ & $Q_{2,3}(x_2+y_j+z)$\\
            \hline\hline
            $10$ & $20$\\
            \hline
            $0*$ & $1*$\\
            \hline
            ${-1}2$ & $02$\\
            \hline
        \end{tabular}
        \caption{$Q_{2,3}(x_2+z) = 10$.}
    \end{subtable}
    \hfill
    \begin{subtable}[h]{0.49\textwidth}
        \centering
        \begin{tabular}{|c||c|}
            \hline
            $Q_{2,3}(y_j+z)$ & $Q_{2,3}(x_2+y_j+z)$\\
            \hline\hline
            $20$ & $10$\\
            \hline
            $0*$ & ${-1}*$\\
            \hline
            $11$ & $01$\\
            \hline
        \end{tabular}
        \caption{$Q_{2,3}(x_2+z) = -10$.}
    \end{subtable}
    \end{table}

    \textbf{Subcase 3.2: $\phi(0,1)=\phi(0,2).$} Set $Q_{1}(x_2+z)=\pm 100$, and choose $Q_{1,2,3}(y_j+z)$ and $Q_{1,2,3}(z)$ from the table. Note that any such choice ensures that $Q_{1}(x_2+y_j+z)=0$.
    \begin{center}
    \begin{tabular}{|c|c||c|}
        \hline
        $Q_{1,2,3}(y_j+z)$ & $Q_{1,2,3}(z)$ & $Q_{1,2,3}(y_j+z)-Q_{1,2,3}(z)$\\
        \hline\hline
        $10*$ & $200$ & $-10*$\\
        \hline
        $01*$ & $110$ & $-10*$\\
        \hline
        $02*$ & $-120$ & $10*$\\
        \hline
        $20*$ & $100$ & $10*$\\
        \hline
    \end{tabular}
    \end{center}
    The value of $*$ in the table may be chosen independently for $y_1$ and $y_2$, in a way that realises $\phi(2,1)$ and $\phi(2,2)$. Finally, set $Q_{1,2,3}(x_1+z)$ to be $00*$, $200$ or $100$ to realise the rest of $\phi$.

    \textbf{Subcase 3.3: $\phi(0,1) \neq \phi(0,2)$, $\phi(2,0) \neq \phi(0,0)$.} We may assume that $\phi(0,2) \neq \phi(0,0)$ and $\phi(1,0) \neq \phi(2,0)$, since otherwise Subcase 3.1 or Subcase 3.2 would apply by symmetry. Then $\phi(1,0)=\phi(0,1)=\phi(0,0) \neq \phi(2, 0) = \phi(0, 2).$ Combined wtih the assumptions on the whole case, this results in
    $\phi(0,0) = \phi(2,2) = \phi(1,0) = \phi(0,1) \neq \phi(0, 2) = \phi(2, 0) = \phi(1, 2) = \phi(2, 1) = \phi(1, 1).$
    
    Set $Q_{1,2,3}(z) = 00*$ and $Q_1(x_1+z)=Q_1(y_1+z)=0$. The rest of $\phi$ is now realised by appropriate choices from the following tables, depending on the value of $\phi(2,0)$.
    \vspace{-0.2cm}
    \begin{table}[h!]
    \begin{subtable}[h]{0.49\textwidth}
    \centering
    \begin{tabular}{|c|c|}
        \hline
        $Q_{1,2,3}(x_2+z)$ & $Q_{1,2,3}(y_2+z)$\\
        \hline\hline
        $100$ & $100$\\
        \hline
        $200$ & $-100$\\
        \hline
    \end{tabular}
    \end{subtable}
    \hfill
    \begin{subtable}[h]{0.49\textwidth}
    \centering
    \begin{tabular}{|c|c|}
        \hline
        $Q_{2,3}(x_1+z)$ & $Q_{2,3}(y_1+z)$\\
        \hline\hline
        $10$ & $10$\\
        \hline
        $20$ & $-10$\\
        \hline
    \end{tabular}
    \end{subtable}
    \end{table}

    Note that, by swapping the roles of $y_1$ and $y_2$, Case 3 also realises any $\phi$ satisfying $\phi(1, 0) \neq \phi(1, 1) = \phi(1, 2) = \phi(2,2) \neq \phi(2, 1)$. As such, taken together, Cases 1-3 cover any scenario in which $\phi(1, 1) = \phi(1, 2)$. By symmetry, they also apply to $\phi(1, 1) = \phi(2, 1)$, $\phi(2, 2) = \phi(2, 1)$, or $\phi(2, 2) = \phi(1, 2)$. This leaves the following as the only remaining possibility.
    \vspace{0.1cm}
    
    \textbf{Case 4:} $\phi(2, 2) = \phi(1, 1) \neq \phi(1, 2) = \phi(2, 1)$. Without loss of generality, assume that $\phi(2, 2) = \phi(1, 1) = A$ and $\phi(1, 2) = \phi(2, 1) = A^C$.

    \textbf{Subcase 4.1: $\phi(0, 1) \neq \phi(0, 2)$.} One of $\phi(0, 1)$ or $\phi(0, 2)$ must be equal to $\phi(1, 1) = \phi(2, 2) = A$. Swapping the roles of $y_1$ and $y_2$ if necessary, we may assume $\phi(0, 1) = A$ so that $\phi(0, 2) = A^C$. Let $Q_{1,2,3}(y_1+z) = 120$, and set $Q_{1,2, 3}(z)$ and $Q_{1,2,3}(x_1+z)$ to $00*$, with an appropriate value of $*$ for each. This already realises $\phi(1,1)$ and $\phi(1,2)$ for any non-zero choice of $Q_{1,2,3}(y_2+z)$. For the rest of $\phi$, choose the values of $Q_{1,2,3}$ at $x_2+z$ and $y_2+z$ according to the table.

    \begin{center}
    \begin{tabular}{|c|c||c|c|}
        \hline
        $Q_{1,2,3}(x_2+z)$ & $Q_{1,2,3}(y_2+z)$ & $Q_{1,2,3}(x_2+y_1+z)$ & $Q_{1,2,3}(x_2+y_2+z)$\\
        \hline\hline
        $100$ & $-110$ & $22*$ & $01*$\\
        \hline
        $-100$ & $200$ & $02*$ & $10*$\\
        \hline
    \end{tabular}
    \end{center}
    By symmetry, this subcase also applies when $\phi(1,0) \neq \phi(2,0)$.

    \textbf{Subcase 4.2: $\phi(0, 1) = \phi(0, 2) = \phi(1, 0) = \phi(2, 0)$.} Use one of the tables below, in which the cell with label $(i,j) \in [0,2]^2$ contains the value of $Q_{1,2,3}(x_i+y_j+z)$, with $x_0=y_0=0$.
\begin{table}[h!]
    \begin{subtable}[h]{0.45\textwidth}
        \centering
        \begin{tabular}{|c|c|c|c|}
            \hline
            {} & \textbf{0} & \textbf{1} & \textbf{2}\\
            \hline
            \textbf{0} & $00*$ & $100$ & $010$\\
            \hline
            \textbf{1} & $010$ & $11*$ & $02*$\\
            \hline
            \textbf{2} & $100$ & $20*$ & $11*$\\
            \hline
        \end{tabular}
        \caption{$\phi(0, 1) = A$}
    \end{subtable}
    \hfill
    \begin{subtable}[h]{0.45\textwidth}
    \centering
    \begin{tabular}{|c|c|c|c|}
        \hline
        {} & \textbf{0} & \textbf{1} & \textbf{2}\\
        \hline
    \textbf{0} & $00*$ & $020$ & $200$\\
        \hline
        \textbf{1} & $0{-1}0$ & $01*$ & $2{-1}*$\\
        \hline
        \textbf{2} & $-100$ & $-12*$ & $10*$\\
        \hline
    \end{tabular}
    \caption{$\phi(0, 1) = A^C$}
    \end{subtable}
    \end{table}

    \textbf{Subcase 4.2: $\phi(0, 1) = \phi(0, 2) \neq \phi(1, 0) = \phi(2, 0)$.} Without loss of generality, assume that $\phi(0, 1) = \phi(0, 2) = A$, then use one of the tables below depending on the value of $\phi(0,0)$.
    
    \begin{table}[h!]
    \begin{subtable}[h]{0.45\textwidth}
    \centering
    \begin{tabular}{|c|c|c|c|}
        \hline
        {} & \textbf{0} & \textbf{1} & \textbf{2}\\
        \hline
    \textbf{0} & $001$ & $100$ & $110$\\
        \hline
        \textbf{1} & $-111$ & $010$ & $020$\\
        \hline
        \textbf{2} & $-100$ & $00{-1}$ & $01{-1}$\\
        \hline
    \end{tabular}
    \caption{$\phi(0,0) = A$}
    \label{subtable:A}
    \end{subtable}
    \hfill
    \begin{subtable}[h]{0.45\textwidth}
    \centering
    \begin{tabular}{|c|c|c|c|}
        \hline
        {} & \textbf{0} & \textbf{1} & \textbf{2}\\
        \hline
    \textbf{0} & $00{-1}$ & $100$ & $110$\\
        \hline
        \textbf{1} & $-110$ & $011$ & $021$\\
        \hline
        \textbf{2} & $-101$ & $002$ & $012$\\
        \hline
    \end{tabular}
    \caption{$\phi(0,0) = A^C$}
    \label{subtable:notA}
    \end{subtable}
    \end{table}

This completes the case analysis, which shows that $(X, Y)$ defined at the start of the proof can indeed be shattered by $\QGS(p,n)$, and therefore the $\vctwo$-dimension of $\QGS(p,n)$ is at least $3$.
\end{proof}

As it stands, this construction cannot be extended to obtain a lower bound of $4$: if we choose $X$ and $Y$ so that all cross-terms have the same value (taken to be zero throughout this section), then the proof of Theorem \ref{theorem:upper} implies that $(X, Y)$ is \textit{not} quadratically shattered by $A$. It may still be possible to construct suitable $X$ and $Y$ using the method outlined here, but the cross-terms $\mu_{i,j}^\bracketed{t}$ would have to have different values. On the other hand, the $\VC_2$-dimension of $\QGS(p,n)$ may be exactly $3$. Whether or not this is the case remains an interesting open question.

\bibliography{references.bib}
\bibliographystyle{amsplain}

\appendix
\section{Multicolour bipartite Ramsey numbers}
\label{appendix:bipartite-ramsey}

In \cite{bipartite-ramsey}, Conlon proved that $br(2;t,t) \leq (1 + o(1))2^{t+1}\log(t)$. The same proof shows that for any $r \geq 2$, $br(r;t,t) \leq (1 + o(1))r^{t+1}\log(t)$ (the only modification needed is to replace the subgraph density of $\frac12$ by $\frac1{r}$ throughout the argument). The proof relies on a lemma which, given an edge density $\rho$, guarantees a copy of $K_{q,s}$ in any large enough bipartite graph of density $\rho$. The version stated here is easily extracted from the proof of \cite[Lemma 1]{bipartite-ramsey} before an asymptotic bound is applied.

\begin{lemma}
\label{lemma:density-bipartite}
    Let $G$ be a subgraph of $K_{m,n}$ with edge density $\rho$, where $0 < \rho \leq 1$. Then, provided that $m > (q-1)\rho^{-1}$ and $$ n > \frac{\binom{m}{q}}{\binom{\rho m}{q}} (s-1),$$
    $G$ must contain a copy of $K_{q,s}$.
\end{lemma}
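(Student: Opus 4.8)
The plan is to run the standard double-counting argument for Zarankiewicz-type problems, adapted to the bipartite host graph $K_{m,n}$. Write the two vertex classes of $K_{m,n}$ as $U$ with $\abs{U}=m$ and $V$ with $\abs{V}=n$, so that $G$ has exactly $\rho mn$ edges. For $v\in V$ put $d(v)=\abs{N(v)\cap U}$, whence $\sum_{v\in V}d(v)=\rho mn$. I would count in two ways the number $N$ of pairs $(v,S)$ with $v\in V$, $S$ a $q$-element subset of $U$, and $S\subseteq N(v)$: summing over the choice of $v$ gives $N=\sum_{v\in V}\binom{d(v)}{q}$, while summing over the choice of $S$ gives $N=\sum_{S}c(S)$, where $c(S)$ denotes the number of common neighbours in $V$ of the $q$-set $S\subseteq U$.

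For a lower bound on $N$, I would invoke convexity of the polynomial $x\mapsto\binom{x}{q}=\frac{x(x-1)\cdots(x-q+1)}{q!}$, which is nonnegative, increasing and convex on $[q-1,\infty)$. The hypothesis $m>(q-1)\rho^{-1}$ says exactly that the average degree $\frac1n\sum_{v}d(v)=\rho m$ exceeds $q-1$; in particular $\binom{\rho m}{q}>0$. Jensen's inequality then yields $N=\sum_{v}\binom{d(v)}{q}\ge n\binom{\rho m}{q}$ — with the caveat that, strictly, one should use the discrete convexity of $a\mapsto\binom{a}{q}$ on $\mathbb{Z}_{\ge0}$ (its first difference $\binom{a}{q-1}$ being visibly nondecreasing) so that the integrality of the $d(v)$ costs nothing.

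Next suppose, for contradiction, that $G$ contains no copy of $K_{q,s}$ with the $q$-class inside $U$ and the $s$-class inside $V$. Then every $q$-subset $S\subseteq U$ has at most $s-1$ common neighbours, i.e.\ $c(S)\le s-1$, and hence $N=\sum_{S}c(S)\le\binom{m}{q}(s-1)$. Comparing the two estimates gives $n\binom{\rho m}{q}\le\binom{m}{q}(s-1)$, that is, $n\le\binom{m}{q}\binom{\rho m}{q}^{-1}(s-1)$, which contradicts the hypothesis $n>\binom{m}{q}\binom{\rho m}{q}^{-1}(s-1)$. Therefore $G$ must contain such a copy of $K_{q,s}$.

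The argument is essentially routine; the one step that warrants care is the convexity/Jensen bound, where one must make sure the degrees involved sit in the range where $\binom{x}{q}$ is genuinely convex — precisely what $m>(q-1)\rho^{-1}$ guarantees — and justify passing from the integer degrees to the real value $\rho m$, which is handled cleanly by the discrete-convexity remark above together with the fact that the piecewise-linear interpolant of a convex function lies above it.
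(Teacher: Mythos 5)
Your proof is correct and is essentially the argument the paper relies on: the Kővári–Sós–Turán double count of pairs $(v,S)$ with $S\subseteq N(v)$, $|S|=q$, bounded below via convexity/Jensen (where $m>(q-1)\rho^{-1}$ puts the average degree $\rho m$ in the convexity range of $x\mapsto\binom{x}{q}$) and above by $\binom{m}{q}(s-1)$ under the assumption of no $K_{q,s}$. This is precisely the proof of Conlon's Lemma 1 that the paper cites, stopped before the asymptotic simplification, and your care over the integer-degree/Jensen step matches the paper's remark about why the hypothesis on $m$ is needed.
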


\noindent The bound on $m$ is necessary to ensure that the function $F(x) = \binom{x}{q}$ is convex so as to allow an application of Jensen's inequality in the proof. We now verify that in the case $q=s=3$, Lemma \ref{lemma:density-bipartite} guarantees the following.

\begin{prop}
\label{prop:Ramseybound}
    For any $r \geq 2$, $\text{br}(r; 3, 3) \leq 4r^3+1$.
\end{prop}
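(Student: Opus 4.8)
The plan is to apply Lemma~\ref{lemma:density-bipartite} to the densest colour class in an arbitrary $r$-colouring of $K_{N,N}$, where $N = 4r^3+1$. So, fix an $r$-colouring of the edges of $K_{N,N}$. Since the $r$ colour classes partition the $N^2$ edges, at least one class $G$ has at least $N^2/r$ of them, and hence edge density $\rho \geq 1/r$ inside $K_{N,N}$. I would then invoke Lemma~\ref{lemma:density-bipartite} with $q = s = 3$ and $m = n = N$ applied to $G$: a copy of $K_{3,3}$ in $G$ is exactly a monochromatic $K_{3,3}$, so producing one shows $\text{br}(r;3,3) \leq N = 4r^3+1$.

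It remains to verify the two hypotheses of the lemma for this choice of parameters. The first, $m > (q-1)\rho^{-1}$, reads $N > 2\rho^{-1}$, and holds because $\rho \geq 1/r$ gives $2\rho^{-1} \leq 2r < 4r^3 + 1 = N$. The second, $n > (s-1)\binom{m}{q}/\binom{\rho m}{q}$, is equivalent to $N\binom{\rho N}{3} > 2\binom{N}{3}$. Since $t \mapsto \binom{t}{3} = t(t-1)(t-2)/6$ is increasing for $t \geq 2$ and $\rho N \geq N/r = 4r^2 + r^{-1} \geq 2$, we have $\binom{\rho N}{3} \geq \binom{N/r}{3}$, so it is enough to prove $N\binom{N/r}{3} > 2\binom{N}{3}$, i.e.
$$(N/r)(N/r - 1)(N/r - 2) \;>\; 2(N-1)(N-2).$$

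To close the argument I would estimate both sides. Because $N = 4r^3+1$, we have $4r^3 \leq N$, whence $N - r \geq (1 - \tfrac{1}{4r^2})N$ and $N - 2r \geq (1 - \tfrac{1}{2r^2})N$; dividing by $r$ yields $N/r - 1 \geq (1 - \tfrac{1}{4r^2})(N/r)$ and $N/r - 2 \geq (1 - \tfrac{1}{2r^2})(N/r)$. Therefore the left-hand side above is at least
$$(N/r)^{3}\Bigl(1 - \tfrac{1}{4r^2}\Bigr)\Bigl(1 - \tfrac{1}{2r^2}\Bigr) \;\geq\; (4r^2)^3 \cdot \tfrac{15}{16}\cdot\tfrac{7}{8} \;=\; \tfrac{105}{2}\,r^6$$
for all $r \geq 2$, using $N/r > 4r^2$ together with the fact that $(1-\tfrac1{4r^2})(1-\tfrac1{2r^2})$ is increasing in $r$ and so is minimised at $r=2$. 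On the other hand $N - 1 = 4r^3$ and $N - 2 < 4r^3$, so the right-hand side is strictly less than $2(4r^3)^2 = 32\,r^6$. As $\tfrac{105}{2}r^6 > 32\,r^6$, the displayed inequality holds for every $r \geq 2$, completing the proof.

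The whole argument is elementary; the only step needing any care is keeping the estimates tight enough at the smallest values $r = 2$ and $r = 3$, where the ratio of the two sides of the key inequality is closest to $1$. The bounds used above leave a comfortable margin there, so no separate treatment of small cases is required.
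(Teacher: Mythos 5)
Your proof is correct, but it takes a genuinely different route from the paper. The paper follows the structure of Conlon's argument: it first finds a colour that is ``dominant'' at many vertices of one side, extracts a vertex $y$ on the other side with at least $4r+1$ neighbours in that colour class, applies Lemma~\ref{lemma:density-bipartite} with $q=3$, $s=2$ to the bipartite graph between those $4r+1$ neighbours and the remaining vertices, and finally attaches $y$ to upgrade the resulting $K_{3,2}$ to a monochromatic $K_{3,3}$. You instead apply Lemma~\ref{lemma:density-bipartite} once, directly with $q=s=3$ and $m=n=N$, to the densest colour class, whose density is at least $1/r$; your verification of the two hypotheses is sound (the monotonicity of $x\mapsto x(x-1)(x-2)/6$ on $x\geq 2$ justifies replacing $\rho$ by $1/r$, the bounds $N/r-1\geq(1-\tfrac{1}{4r^2})N/r$ and $N/r-2\geq(1-\tfrac{1}{2r^2})N/r$ follow from $4r^3\leq N$, and $\tfrac{105}{2}r^6>32r^6$ closes the inequality for all $r\geq 2$). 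What each approach buys: yours is shorter and more elementary, and in fact the direct density count has room to spare for $q=s=3$ (it would yield a bound of roughly $2r^3+O(r)$ rather than $4r^3+1$); the paper's two-step argument is the one that scales to larger complete bipartite targets, being the mechanism behind the general bound $br(r;t,t)\leq(1+o(1))r^{t+1}\log t$, but for this particular proposition the intermediate high-degree-vertex step is not needed.
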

\begin{proof}
    Fix an $r$-colouring of $K_{n, n}$, where $n = 4r^3+1$. Denote the two vertex classes of $K_{n, n}$ by $X$ and $Y$, respectively. For each vertex $v \in X$, there is a colour that appears on at least $4{r}^2+1$  edges incident with $v$. Call such a colour `dominant'. By averaging, there is a colour $c \in [r]$ which is dominant for $m \geq 4r^2+1$ vertices in $X$. Let $X'$ denote the set of these vertices, and let $H$ be the graph on $X' \cup Y$ consisting only of the edges coloured $c$. Note that $H$ is a subgraph of $K_{m, n}$.
    
    The average degree of the vertices of $Y$ in $H$ is
    $$\frac{(4r^2+1)^2}{4r^3+1} = \frac{16r^4+8r^2+1}{4r^3+1} = 4r+\frac{8r^2-4r+1}{4r^3+1}.$$
    In particular, there must be a vertex $y \in Y$ of degree at least $q = 4r+1$ and, therefore, there is a copy of $K_{q, 1}$ in $H$.

    Let $X''\subseteq X'$ denote the $q$ vertices of this instance of $K_{q, 1}$, and $Y' = Y \backslash \set{y}$. Let $H'$ be the induced subgraph of $H$ on $(X'', Y')$. The total number of edges in $H'$ is at least $4r^2 q$, which means its edge density $\rho'$ in $K_{q, n-1}$ satisfies $\rho' \geq {4r^2}/(n-1) = 1/r.$
    
    By Lemma \ref{lemma:density-bipartite}, $H$ has a copy of $K_{3,2}$ so long as $q > 2\rho'^{-1}$ (which indeed holds since $q > 4r$) and additionally
        $$n-1 > \frac{\binom{q}{3}}{\binom{\rho' q}{3}}$$
    To see that the latter holds, observe that $\rho' q > 4$, and therefore
    \begin{align*}
        \frac{\binom{q}{3}}{\binom{\rho' q}{3}} &< \frac{q(q-1)(q-2)}{4 \times 3 \times 2} = \frac{4r(16r^2-1)}{24} < 4r^3 = n-1.
    \end{align*}
    Hence, $H'$ contains a copy of $K_{3,2}$.
    
    On the other hand, all vertices of $X''$ were part of a copy of $K_{q, 1}$ of the same colour, so adding $y$ to $K_{3,2}$ gives a monochromatic $K_{3,3}$, as required.
\end{proof}

\end{document}